\def\R{\mathbb{R}}
\def\Z{\mathbb{Z}}
\def\d{\mathrm{d}}
\newcommand{\dsum}{\displaystyle\sum}
\newcommand{\dmax}{\displaystyle\max}
\pgfplotsset{compat=1.14}
\newtheorem{lem}{Lemma}[section]
\newtheorem{prop}{Proposition}[section]
\definecolor{armygreen}{rgb}{0.19, 0.53, 0.43}
\let\origmaketitle\maketitle
\def\maketitle{
  \begingroup
  \def\uppercasenonmath##1{} % this disables uppercasing title
  \let\MakeUppercase\relax % this disables uppercasing authors
  \origmaketitle
  \endgroup
}
\begin{document}

\title[Hub Location Problems with Neighborhoods]{\huge On hub location problems in geographically flexible networks}
\date{\today}

\author[V. Blanco \MakeLowercase{and} J. Puerto]{{\large V\'ictor Blanco$^\dagger$ and  Justo Puerto$^\ddagger$}\medskip\\
$^\dagger$IEMath-GR, Universidad de Granada\\
$^\ddagger$IMUS, Universidad de Sevilla}

\address{IEMath-GR, Universidad de Granada, SPAIN.}
\email{vblanco@ugr.es}

\address{IMUS, Universidad de Sevilla, SPAIN.}
\email{puerto@us.es}

\date{\today}

\begin{abstract}
In this paper we propose an extension of the Uncapacitated Hub Location Problem where the potential positions of the hubs are not fixed in advance. Instead, they are allowed to belong to a region around an initial discrete set of nodes. We give a general framework in which the collection, transportation and distribution costs are based on norm-based distances and the hub-activation set-up costs depend, not only on the the location of the hub that are opened but also on the size of the region where they are placed. Two alternative mathematical programming formulations are proposed. The first one is a compact formulation while the second one involves a family of constraints of exponential size that we separate efficiently giving rise to a branch-and-cut algorithm. The results of an extensive computational experience are reported showing the advantages of each of the approaches.
\end{abstract}

\subjclass[2010]{90B80, 90B85, 	90C11, 90C30}
\keywords{Hub Location; Mixed Integer Non Linear Programming; Neighborhoods; Network Design.}

\maketitle

\section{Introduction}

Hub-and-spoke networks have attracted the attention of the Locational Analysis community in the recent years since they allow to efficiently route commodities between customers in many transportation systems. In these networks, the flow between customers, rather than being sent directly user-to-user, is routed via some transshipment points, \textit{the hubs nodes}. Arcs between hubs nodes are cheaper than normal ones and thus, in this way, the overall transportation costs are reduced due to the economy of scale induced by sending a large amount of flow through the hub arcs. Hub Location Problems combine two kinds of decisions: 1) the optimal placement of the hub facilities, and 2) the best routing strategies on the induced hub-and-spoke network, i.e., the amount of flow sent through the spoke-to-hub and the hub-to-hub links. The interested reader is referred to \cite{Alumur-Kara_EJOR08} for a recent survey on hub location problems.

The literature of single-allocation hub location problems usually distinguishes between capacitated and uncapacitated problems. In the latter case, one may consider either that a given number of hubs, $p$, must be located, dealing with the $p$-hub location problem (see \cite{OKelly_EJOR87}) or that a set-up cost for each of the potential hubs is provided, and the optimal number of hubs is part of the decision process. Here, we deal with a new version of the Uncapacitated Hub Location Problem with Fixed Costs (UHLPFC). The first formulation for UHLPFC was presented by O'Kelly in 1992 \cite{OKelly_PRS92}. There, a mixed integer programming problem with quadratic (non convex) objective function was proposed for the problem. Such a difficulty was overcome by solving different $p$-hub location problems for different values of $p$. In \cite{AV98}, a different formulation was proposed based on modeling how the flow between each pair of nodes is transported between them. In \cite{Labbe-Yaman_NETW04}, the authors presented different linearizations for the quadratic terms in the objective function by using a family of $4$-index variables and projecting them out.  All the above papers deal with exact methods but also some heuristics and metaheuristic approaches have been proposed for the problem (see, for instance, \cite{AbdinnourHelm_EJOR98,TopcuogluEtal_2005}). One can also find several extensions of the UHLPFC, as the incorporation of congestion and service time \cite{AlumurEtAl_AMM18}, modular capacities \cite{Hoff_EtAl_COR17}, flow-dependent transportation costs \cite{Tanash_EtAl_COR17}, robust versions \cite{Boukani_CAM16} or the use of more sophisticated objective functions based on covering and ordered median functions~\cite{Ernst_2017,PRR11,PRR13}.

When designing a (discrete) hub-and-spoke network one usually assumes that the positions of a set of potential hubs are known, i.e., one makes the decision on the optimal location of the hubs based on the assumption that the hubs have to be located on a set of nodes whose exact placement on a given space is provided. In many cases such an assumption becomes highly restrictive, specially in those cases in which some flexibility is allowed for the location of the hubs, as in telecommunication networks or in case some imprecision affects the positions of the nodes. An   alternative to this assumption is the consideration of a continuous framework in which the hubs are allowed to be located (see \cite{OKelly_TS86}). Nevertheless, it is unrealistic in practice to assume that hubs can be located at any place. A different perspective, which is the one that we explore in this paper, comes from modelling such an issue via \textit{neighborhoods}. This framework allows us to provide a potential set of (exact) positions for the hub nodes, but in the decision process the decision-maker is allowed to place the hubs not only at their potential original geographical coordinates, but in a region around it, namely, its neighborhood. This approach is particularly useful in networks in which the decision-maker sets preferred regions where the nodes can be located, instead of an exact set of positions for them. Moreover, the continuous and the discrete cases of this problem are just particular forms of the neighborhoods version of the problem by adjusting adequately the parameters describing the neighborhoods. For instance, defining neighborhoods as singletons the uncapacitated single-allocation Hub Location Problem with variable-size Neighborhoods (UHLPN) coincides with the discrete UHLPFC while using big enough neighborhoods (and zero costs for the size of them) results in the continuous version of the problem. This approach of considering neighborhoods  is not totally new and some combinatorial optimization problems have been already analyzed under this neighborhood perspective, as the Minimum Cost Spanning Tree~\cite{Blanco_EtAl_EJOR17}, the Traveling Salesman Problem~\cite{TSPN_COR17,TSPDubinsN_IEEE15,TSPN_EJOR18}, the Shortest Path Problem or  different facility location problems~\cite{Blanco19}.  However, as far as we know, there is no previous attempt on the simultaneous determination of the location and optimal size of the neighborhoods. Observe that the neighborhoods represent locational imprecision or flexibility on the placement of the objects under analysis. Thus, fixing a pre-specified neighborhood size implies losing a degree of freedom which reduces the ability of the model to choose the right neighborhood size. This is particularly convenient in the hub-and-spoke network model under analysis. Note also that the information on the \textit{optimal} size of the neighborhood may help the decision-maker to adjust the original positions of the nodes or to restrict the sizes conveniently.

In this paper, we analyze hub location problems with some flexibility in the network design problem in a very general framework. In particular, we introduce an extension of the classical uncapacitated single-allocation hub location problem, that we call the uncapacitated single-allocation Hub Location Problem with variable-size Neighborhoods (UHLPN). In this problem we are given a set of demand points, a set of potential hubs (as coordinates in $\R^d$), an OD flow matrix between demand points, a set-up cost for opening each potential hub, a cost measure in $\R^d$ and for each potential facility, a neighborhood shape which represents some flexibility on the placement of the hub, and an additional cost based on the size of the neighborhood. The goal of UHLPN is to set up a hub-and-spoke network minimizing transportation, collection, distribution and set-up costs making decisions on:
\begin{itemize}
\item How many and  which hubs must be opened and the assignment pattern of demand points to hubs.
\item The size of the neighborhood for one each of the open hubs. The activation cost of a neighborhood may depend on the volume of the region that it defines. Smaller neighborhoods incur smaller activation costs.
\item Finding for each  demand point the location of its hub-server within the neighborhood where it must be served.
\end{itemize}

The rest of the paper is organized as follows. In Section \ref{sec:2} we introduce the elements describing the UHLPN and fix the notation for the rest of the sections. Section \ref{sec:3} provides a mathematical programming formulation for the problem as well as two different reformulations. While the first one is based on linearizing the bilinear and trilinear terms in the original formulation, the second one consists of replacing some of the non linear constraints by a family of linear constraints of exponential size. Based on the latter, in Section \ref{sec:4} we derive a brach-and-cut approach to solve the HLPN. In Section \ref{sec:5} we report the results of our extensive computational experience based on the usual hub location datasets. Finally, in Section \ref{sec:6} we draw some conclusions and further research on the topic.

\section{Uncapacitated Single-Allocation Hub Location Problem with Neighborhoods}\label{sec:2}

In this section we formally introduce the UHLPN and fix the notation for the rest of the paper. We describe here the input data of the problem, the feasible solutions of the UHLPN and the objective function of the problem that allows us to evaluate the feasible alternatives.

\subsection{Input Data}

We are given an undirected connected graph, $G=(N,E)$, with nodes $N=\{1, \ldots, n\}$. The graph is assumed to be embedded in $\R^d$, i.e., each node $i\in N$ is positioned at point $a_i\in \R^d$. For each pair of nodes $i, j\in N$ there is a known demand, $w_{ij}\ge 0$, which represents the amount of flow that must be sent (\emph{routed}) from origin $i$ to destination $j$. Each origin/destination pair of nodes with positive demand will be referred to as an OD pair. We denote by $W=(w_{ij})_{i,j\in N}$ the OD flow matrix, by $O_i=\dsum_{j\in N:\atop \{i,j\}\in E} w_{ij}$ the overall flow with origin at $i\in N$, and by $D_{j}=\dsum_{i:\atop \{i,j\} \in E} w_{ij}$  the total flow with destination at $j\in N$.

 \subsection{Feasible Actions}

 Like in other hub location models, in the UHLPN a set of hubs must be selected, among the nodes in $N$, to be used as potential intermediate points when routing the flows associated with OD pairs.  In particular, the flow associated with a given OD pair $(i, j)$ is assumed to be \emph{physically} routed via a \emph{feasible path} of the form $(a_i, x_k, x_m, a_j)$ where $x_k$, and $x_m$ are open hubs associated with locations $k, m\in N$, respectively, and it is possible that $k=m$ (so $x_k=x_m$).  When nodes $i$ and $j$ are both selected to host hubs, these paths reduce to $(x_i, x_i, x_j, x_j)$ and consist of one single arc. Otherwise, if $i$ is not a hub then $x_k\ne a_i$; similarly, $x_m\ne a_j$ if $j$ is not a hub.

We assume \emph{single allocation} of nodes to open hubs. That is, for each node $i\in N$ all the flows \emph{leaving} and \emph{entering} $i$ must be routed via a single (unique) hub node, which represents the access and exit point of $i$ to and from the distribution system.

Contrary to classical hub location models, in the UHLPN there is not a discrete set of points where hubs may be located at, even if the problem is stated on a graph with a discrete set of nodes. In particular, when node $k\in N$ is selected to host an open hub, the actual position of the hub is not known in advance and must be set within a neighbourhood of the  selected node $k$. For this,  associated with each potential hub $k\in N$ we are given a \emph{basic neighbourhood}, $S_k \subseteq \R^d$, which is assumed to be a second order cone (SOC) representable set containing the origin. The neighborhood set for $a_k$, then, is assumed to be in the form
$$
\mathcal{N}_k (r) = a_k+ \{r \cdot z: z \in S_k\},
$$
for $r\geq 0$, i.e., the $a_k$-translation of the $r$-dilation of the set $S_k$. The choice of $r$, together with the specific location of the hub, $x_k$, is part of the decision making process.

Thus, we assume that feasible paths are in the form $(a_i, x_k, x_m, a_j)$ where $a_i$ and $a_j$ correspond to OD pairs and $x_k$ and $x_m$ belong to the dilated neighborhoods of nodes $k$ and $m$ in the graph $G$.

In summary, for determining a feasible UHLPN solution the following decisions must be made: (1) the nodes that host the open hubs into their neighborhoods; (2) the dilation factor applied to the basic neighbourhood of each selected node, as well as the actual position of its associated hub within its dilated neighbourhood; and, (3) the (single) allocation of non-hub nodes (\emph{spokes}) to open hubs.

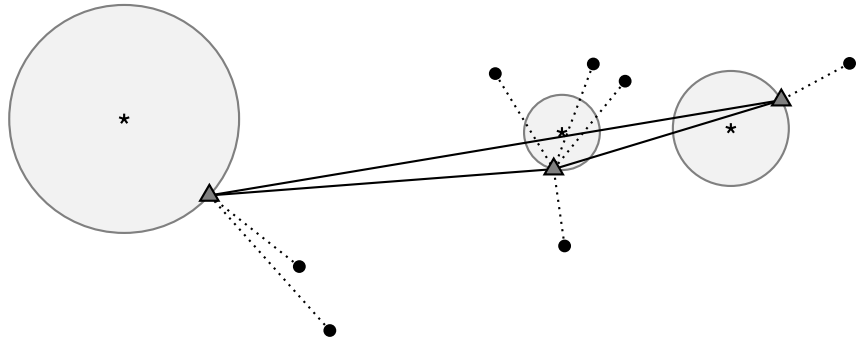
\begin{figure}[h]
\begin{center}
% This file was created by matplotlib2tikz v0.7.4.
\begin{tikzpicture}[scale=2]

\begin{axis}[
hide x axis,
hide y axis,
xmin=-111,
ymax=46,
%xmax=-70,
%axis equal,
x=4,
y=4,
]
\draw[ draw=gray,fill=gray, fill opacity=0.1] (axis cs:-76.6121893,39.2903848) circle (2.71178021138157);
\draw[ draw=gray,fill=gray, fill opacity=0.1](axis cs:-84.5120196,39.1031182) circle (1.77284477731651);
\draw[ draw=gray,fill=gray, fill opacity=0.1] (axis cs:-104.990251,39.7392358) circle[radius=5.37603398242093];

\addplot [black, densely dotted]
table {%
-84.3879824 33.7489954
-84.8975432152862 37.3726992863417
};
\addplot [black, densely dotted]
table {%
-71.0588801 42.3600825
-74.2525317815513 40.6267103786935
};
\addplot [black, densely dotted]
table {%
-87.6297982 41.8781136
-84.8975432152862 37.3726992863417
};
\addplot [black, densely dotted]
table {%
-81.556235 41.5200518
-84.8975432152862 37.3726992863417
};
\addplot [black, densely dotted]
table {%
-96.7969879 32.7766642
-101.005023872329 36.1309528376735
};
\addplot [black, densely dotted]
table {%
-83.0457538 42.331427
-84.8975432152862 37.3726992863417
};
\addplot [black, densely dotted]
table {%
-95.3698028 29.7604267
-101.005023872329 36.1309528376735
};

\addplot [mark=triangle*,  fill=gray,mark size=2, mark options={solid}, only marks]
table {%
-74.2525317815513 40.6267103786935
};
\addplot [mark=triangle*,  fill=gray,mark size=2, mark options={solid}, only marks]
table {%
-101.005023872329 36.1309528376735
};
\addplot [mark=triangle*, fill=gray,mark size=2, mark options={solid}, only marks]
table {%
-84.8975432152862 37.3726992863417
};
\addplot[black]
table {%
-84.8975432152862 37.3726992863417
-74.2525317815513 40.6267103786935
};
\addplot[black]
table {%
-101.005023872329 36.1309528376735
-74.2525317815513 40.6267103786935
};

\addplot[black] 
table {%
-101.005023872329 36.1309528376735
-84.8975432152862 37.3726992863417
};
\addplot [black, mark=*, mark size=1, mark options={solid}, only marks]
table {%
-84.3879824 33.7489954
};
\addplot [black, mark=star, mark size=1, mark options={solid}, only marks]
table {%
-76.6121893 39.2903848
};
\addplot [black, mark=*, mark size=1,mark options={solid}, only marks]
table {%
-71.0588801 42.3600825
};
\addplot [black, mark=*, mark size=1,mark options={solid}, only marks]
table {%
-87.6297982 41.8781136
};
\addplot [black, mark=star, mark size=1, mark options={solid}, only marks]
table {%
-84.5120196 39.1031182
};
\addplot [black, mark=*, mark size=1,mark options={solid}, only marks]
table {%
-81.556235 41.5200518
};
\addplot [black, mark=*, mark size=1,mark options={solid}, only marks]
table {%
-96.7969879 32.7766642
};
\addplot [black, mark=star, mark size=1, mark options={solid}, only marks]
table {%
-104.990251 39.7392358
};
\addplot [black, mark=*, mark size=1, mark options={solid}, only marks]
table {%
-83.0457538 42.331427
};
\addplot [black, mark=*, mark size=1,mark options={solid}, only marks]
table {%
-95.3698028 29.7604267
};

\end{axis}

\end{tikzpicture}
\end{center}
\caption{Feasible instance for the UHLPN.\label{fig:0}}
\end{figure}

A possible choice for modeling different SOC-representable neighborhoods, is by means of unit balls of norms, i.e., when $S_k = \{z \in \R^d: \|z\| \leq 1\}$ for some norm $\|\cdot\|$ in $\R^d$, which belong to the class of $\ell_p$-norm or polyhedral norms~\cite{BPE14}. For instance, if the basic neighbours are Euclidean unit balls, dilations by $r$  consist of Euclidean balls of radius $r$ and center $a_k$, while for polyhedral norms, the resulting neighbourhoods are dilations of symmetric polytopes. One may also consider more sophisticated shapes for the basic neighborhoods, as for instance polyellipsoids~\cite{BP19Polyellipses}. We assume that for every potential hub location $k\in N$, there is an upper bound, $R_k \in \R_+$, on the maximum size of the dilation factor of its associated neighbourhood.  \\

In Figure \ref{fig:0} we show a feasible instance of our problem. Black dots are the spoke nodes. Gray disks ($\ell_2$-balls on the plane) are the dilated neighborhoods centered at the initial position of the active hub nodes (asterisks). The position on the neighborhoods of the hubs are the triangles in the picture. Finally, dotted lines represent allocation between spoke and hub nodes and the solid lines form the hub backbone network.
\subsection{Costs}

The cost of a feasible solution is the sum of the set-up costs of the activated hubs plus the transportation costs for routing the commodities, through the allocated hubs.
\begin{description}
\item[Set-up costs:] Activating a hub associated with node $k\in N$ brings a set-up cost $F_k(r)$, which depends on $k$ and the dilation factor, $r$, that is applied to the basic neighborhood. In particular, we consider set-up costs of the form:
$$
F_k(r) = f_k + g_k(r),
$$
where the term $f_k$ represents a fixed cost for establishing a hub at node $k$, whereas $g_k: \R_+ \rightarrow \R$ is a second order cone representable function of the dilation factor, verifying that $g_k(r) \geq 0$, for all $r\geq 0$, and $g_k(0)=0$  (null dilations have zero cost). Some possible choices of $g_k$ are those in which the cost of installing a hub with $r$-dilation increases linearly with $r$, being then $g_k(r) = \Lambda_k r$ for a given coefficient $\Lambda_k\geq 0$; or a polynomial function $g_k(r) = \Lambda_k r^d$ which may represent the per unit (area/volume) installation cost of a neighbourhood associated with node $k$.

\item[Routing Costs:]  Sending flow though the path $(a_i, x_k, x_m, a_j)$ incurs a unit routing cost $\d^{C}(a_i, x_k)+\d^H(x_k, x_m) +\d^{D}(x_m,a_j)$, where  $\d^{C}, \d^{H}, \d^D: \R^d\times \R^d \rightarrow \R_+$ are given cost/distance functions for the collection, transportation and distribution of unitary  flows, respectively. To reflect economies of scale between hub nodes, we assume that $\d^H(x,y) < \d^C(x,y)$ and $\d^H(x,y)<\d^D(x,y)$, for all $x, y \in \R^d$, i.e., the cost of sending a unit of flow through two hub node  positions (in the hub backbone network) is smaller than the one of using spoke-to-hub links between the same positions. Moreover, we assume that in all cases, $\d^C(x,x)= \d^H(x,x)= \d^D(x,x)=0$ for all $x\in \R^d$.  A very interesting case which generalizes the usual practices for economies of scale in hub location problems is to consider that $\d^C = \d^D$ and $\d^H = \alpha \d^C$, for some $\alpha \in (0,1)$ for some distance measure $\d^C$ in $\R^d$. Also, using the relationship between different $\ell_p$-norm based distances ($\d_{\ell_{p_1}} < d_{\ell_{p_2}}$ whenever $p_1>p_2\geq 1$), one could also choose $\d^C=\d^D = \d_{\ell_{p_1}}$ and $\d^H=\d_{\ell_{p_2}}$ for any $p_1, p_2 \geq 1$ (here $\d_{\ell_p}$ stands for the distance induced by the $\ell_p$ norm)  In Figure \ref{routingcosts} we illustrate the different routing costs considered in the UHLPN, in a case in which $\d^C=\d^D=\d_{\ell_1}$ and $\d^H=\d_{\ell_2}$. The costs of routing a unit flow from node $i$ to node $j$ are highlighted with dotted lines (dashed lines correspond with collection and distribution connections while solid lines are hub links).
\end{description}
\begin{figure}[h]
\begin{center}
% This file was created by matplotlib2tikz v0.7.4.

\begin{tikzpicture}[scale=0.75]

\begin{axis}[
hide x axis,
hide y axis,
%tick align=outside,
%tick pos=left,
%x grid style={white!69.01960784313725!black},
%xmin=14.3145964337343, xmax=45.8983683582657,
%xtick style={color=black},
%y grid style={white!69.01960784313725!black},
%ymin=11.1360127477343, ymax=57.0031554292657,
%ytick style={color=black}
%axis equal image,
x=0.3cm,
y=0.3cm,
]

%\addplot [very thin, red, mark=*, mark size=5, mark options={solid}, only marks]
%table {%
%38.2188846318043 32.1551769960305
%};
%\addplot [very thin, red, mark=*, mark size=5, mark options={solid}, only marks]
%table {%
%23.572548971 37.6328299723607
%};
%\addplot [very thin, red, mark=*, mark size=5, mark options={solid}, only marks]
%table {%
%32.6905287105151 47.8833010221345
%};
\addplot [very thin, black, dashed]
table {%
20.355966023 16.167127237
23.572548971 16.167127237
};
\addplot [very thin, black, dashed]
table {%
23.572548971 16.167127237
23.572548971 37.6328299723607
};
\addplot [very thick, black, dotted,<-]
table {%
39.98859202 19.773197847
38.2188846318043 19.773197847
};
\addplot [very thick, black, dotted,<-]
table {%
38.2188846318043 19.773197847
38.2188846318043 32.1551769960305
};
\addplot [very thin, black, dashed]
table {%
23.572548971 31.529184022
23.572548971 31.529184022
};
\addplot [very thin, black, dashed]
table {%
23.572548971 31.529184022
23.572548971 37.6328299723607
};
\addplot [very thin, black, dashed]
table {%
40.867253869 38.565884488
38.2188846318043 38.565884488
};
\addplot [very thin, black, dashed]
table {%
38.2188846318043 38.565884488
38.2188846318043 32.1551769960305
};
\addplot [very thin, black, dashed]
table {%
27.520840478 46.921515986
32.6905287105151 46.921515986
};
\addplot [very thin, black, dashed]
table {%
32.6905287105151 46.921515986
32.6905287105151 47.8833010221345
};
\addplot [very thin, black, dashed]
table {%
36.067877874 44.894490748
32.6905287105151 44.894490748
};
\addplot [very thin, black, dashed]
table {%
32.6905287105151 44.894490748
32.6905287105151 47.8833010221345
};
\addplot [very thick, black, dotted, ->]
table {%
19.345710923 51.97204094
32.6905287105151 51.97204094
};
\addplot [very thick, black, dotted, ->]
table {%
32.6905287105151 51.97204094
32.6905287105151 47.8833010221345
};
%\addplot [very thin, blue, mark=*, mark size=6, mark options={solid}, only marks]
%table {%
%38.2188846318043 32.1551769960305
%};
%\addplot [very thin, blue, mark=*, mark size=6, mark options={solid}, only marks]
%table {%
%23.572548971 37.6328299723607
%};
\addplot [very thin]
table {%
23.572548971 37.6328299723607
38.2188846318043 32.1551769960305
};
\addplot [very thin, fill=gray, mark=triangle*, mark size=0.5,  mark options={solid}, only marks]
table {%
38.2188846318043 32.1551769960305
};
\addplot [very thin, fill=gray, mark=triangle*, mark size=0.5,  mark options={solid}, only marks]
table {%
32.6905287105151 47.8833010221345
};
\addplot [very thick, dotted, ->]
table {%
32.6905287105151 47.8833010221345
38.2188846318043 32.1551769960305
};
\addplot [very thin, blue, mark=*, mark size=0.5,  mark options={solid}, only marks]
table {%
23.572548971 37.6328299723607
};
\addplot [very thin]
table {%
32.6905287105151 47.8833010221345
23.572548971 37.6328299723607
};
\addplot [very thin, black, mark=*, mark size=1,  mark options={solid}, only marks]
table {%
20.355966023 16.167127237
};
\addplot [very thin, black, mark=*, mark size=1,  mark options={solid}, only marks]
table {%
39.98859202 19.773197847
};
\addplot [very thin, black, mark=*, mark size=1,  mark options={solid}, only marks]
table {%
23.572548971 31.529184022
};
\addplot [very thin, black, mark=asterisk, mark size=1, mark options={solid}, only marks]
table {%
37.30436959 32.079249435
};
\addplot [very thin, black, mark=asterisk, mark size=1, mark options={solid}, only marks]
table {%
24.512513179 38.835826089
};
\addplot [very thin, black, mark=*, mark size=1,  mark options={solid}, only marks]
table {%
40.867253869 38.565884488
};
\addplot [very thin, black, mark=*, mark size=1,  mark options={solid}, only marks]
table {%
27.520840478 46.921515986
};
\addplot [very thin, black, mark=*, mark size=1,  mark options={solid}, only marks]
table {%
36.067877874 44.894490748
};
\addplot [very thin, black, mark=*, mark size=1,  mark options={solid}, only marks]
table {%
19.345710923 51.97204094
};
\addplot [very thin, black, mark=asterisk, mark size=1, mark options={solid}, only marks]
table {%
33.635749233 49.663479404
};

\draw[draw=gray,fill=gray,opacity=0.45] (axis cs: 37.30436959,32.079249435) circle [radius=0.3*0.917657250849806cm];
\draw[draw=gray,fill=gray,opacity=0.45] (axis cs: 24.512513179,38.835826089) circle  [radius=0.3*1.52667362730612cm];
\draw[draw=gray,fill=gray,opacity=0.45] (axis cs: 33.635749233,49.663479404) circle  [radius=0.3*2.01555724463284cm];

\node[left] at (axis cs: 19.345710923, 51.97204094) {$a_i$};
\node[left] at (axis cs: 32.6905287105151, 47.8833010221345) {$x_k$};
\node[right] at (axis cs: 38.2188846318043, 32.1551769960305) {$x_m$};
\node[right] at (axis cs: 39.98859202, 19.773197847) {$a_j$};

\node[left] at (axis cs: 38,25) {\small $\d^D(x_m, a_j)$};
\node[right] at (axis cs: 36,41) {\small $\d^H(x_k, x_m)$};
\node[below] at (axis cs: 28,51) {\small $\d^C(a_i, x_k)$};
\end{axis}

\end{tikzpicture}
\end{center}
\caption{Routing costs in the UHLPN.\label{routingcosts}}
\end{figure}
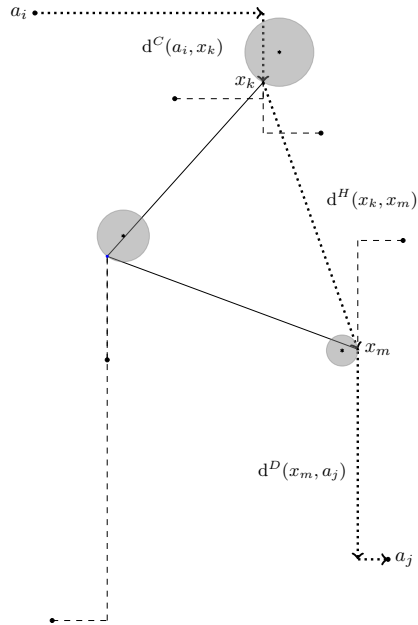
The UHLPN is to find a feasible solution of minimum total cost. Thus, the UHLPN generalizes classical hub location problems with facilities set-up costs where neighbourhoods are not allowed (upper bound for the dilations $R=0$), and hubs must be necessarily located at their associated points $a_i$, $i\in N$. These problems, can  also be obtained as particular cases of the UHLPN by setting to an arbitrarily large value all the coefficients $\Lambda_k$, $k\in K$, thus enforcing all the dilations to be zero.

Figure \ref{fig0} shows the solution of a ten node AP instance~\cite{Ernst96} in two different situations: without  neighbourhoods (left), and with $\ell_2$-norm neighborhoods.
Dashed lines represent spoke allocations to open hubs and solid lines inter-hub connections. Observe that the optimal design of the network changes if some flexibility is allowed to the hubs for their location.
\begin{figure}[h]\begin{center}
% This file was created by matplotlib2tikz v0.7.4.
\begin{tikzpicture}

\begin{axis}[
hide x axis,
hide y axis,
axis equal,
]
\draw[fill=gray,opacity=0.3] (axis cs:37.30436959,32.079249435) circle (0.972006454544975);
\draw[fill=gray,opacity=0.3] (axis cs:33.635749233,49.663479404) circle (1.69958504074655);

\addplot [semithick, black, dashed]
table {%
20.355966023 16.167127237
36.3589276286313 31.853559367394
};
\addplot [semithick, black, dashed]
table {%
39.98859202 19.773197847
36.3589276286313 31.853559367394
};
\addplot [semithick, black, dashed]
table {%
23.572548971 31.529184022
36.3589276286313 31.853559367394
};
\addplot [semithick, black, dashed]
table {%
24.512513179 38.835826089
32.4807250503454 48.4166801259713
};
\addplot [semithick, black, dashed]
table {%
40.867253869 38.565884488
36.3589276286313 31.853559367394
};
\addplot [semithick, black, dashed]
table {%
27.520840478 46.921515986
32.4807250503454 48.4166801259713
};
\addplot [semithick, black, dashed]
table {%
36.067877874 44.894490748
32.4807250503454 48.4166801259713
};
\addplot [semithick, black, dashed]
table {%
19.345710923 51.97204094
32.4807250503454 48.4166801259713
};
\addplot [semithick, fill=gray, mark=triangle*, mark size=3, mark options={solid}, only marks]
table {%
36.3589276286313 31.853559367394
};
\addplot [semithick, fill=gray, mark=triangle*, mark size=3, mark options={solid}, only marks]
table {%
32.4807250503454 48.4166801259713
};
\addplot [semithick]
table {%
32.4807250503454 48.4166801259713
36.3589276286313 31.853559367394
};
\addplot [semithick, black, mark=*, mark size=2, mark options={solid}, only marks]
table {%
20.355966023 16.167127237
};
\addplot [semithick, black, mark=*, mark size=2, mark options={solid}, only marks]
table {%
39.98859202 19.773197847
};
\addplot [semithick, black, mark=*, mark size=2, mark options={solid}, only marks]
table {%
23.572548971 31.529184022
};

\addplot [semithick, black, mark=*, mark size=2, mark options={solid}, only marks]
table {%
24.512513179 38.835826089
};
\addplot [semithick, black, mark=*, mark size=2, mark options={solid}, only marks]
table {%
40.867253869 38.565884488
};
\addplot [semithick, black, mark=*, mark size=2, mark options={solid}, only marks]
table {%
27.520840478 46.921515986
};
\addplot [semithick, black, mark=*, mark size=2, mark options={solid}, only marks]
table {%
36.067877874 44.894490748
};
\addplot [semithick, black, mark=*, mark size=2, mark options={solid}, only marks]
table {%
19.345710923 51.97204094
};
\end{axis}

\end{tikzpicture}~\hspace*{1.5cm}~% This file was created by matplotlib2tikz v0.7.4.
\begin{tikzpicture}

\begin{axis}[
hide x axis,
hide y axis,
axis equal,
]
\draw[draw=red,fill=red,opacity=0.3] (axis cs:37.30436959,32.079249435) circle (0);
\draw[draw=red,fill=red,opacity=0.3] (axis cs:24.512513179,38.835826089) circle (0);
\draw[draw=red,fill=red,opacity=0.3] (axis cs:33.635749233,49.663479404) circle (0);

\addplot [semithick, black, dashed]
table {%
20.355966023 16.167127237
24.512513179 38.835826089
};
\addplot [semithick, black, dashed]
table {%
39.98859202 19.773197847
37.30436959 32.079249435
};
\addplot [semithick, black, dashed]
table {%
23.572548971 31.529184022
24.512513179 38.835826089
};
\addplot [semithick, black, dashed]
table {%
40.867253869 38.565884488
37.30436959 32.079249435
};
\addplot [semithick, black, dashed]
table {%
27.520840478 46.921515986
33.635749233 49.663479404
};
\addplot [semithick, black, dashed]
table {%
36.067877874 44.894490748
33.635749233 49.663479404
};
\addplot [semithick, black, dashed]
table {%
19.345710923 51.97204094
33.635749233 49.663479404
};
\addplot [semithick, mark=triangle*, fill=gray,mark size=3, mark options={solid}, only marks]
table {%
37.30436959 32.079249435
};
\addplot [semithick, mark=triangle*, fill=gray,mark size=3, mark options={solid}, only marks]
table {%
24.512513179 38.835826089
};
\addplot [semithick]
table {%
24.512513179 38.835826089
37.30436959 32.079249435
};
\addplot [semithick,  mark=triangle*, fill=gray,mark size=3, mark options={solid}, only marks]
table {%
37.30436959 32.079249435
};
\addplot [semithick,mark=triangle*, fill=gray, mark size=3, mark options={solid}, only marks]
table {%
33.635749233 49.663479404
};
\addplot [semithick]
table {%
33.635749233 49.663479404
37.30436959 32.079249435
};
\addplot [semithick]
table {%
33.635749233 49.663479404
24.512513179 38.835826089
};
\addplot [semithick, black, mark=*, mark size=2, mark options={solid}, only marks]
table {%
20.355966023 16.167127237
};
\addplot [semithick, black, mark=*, mark size=2, mark options={solid}, only marks]
table {%
39.98859202 19.773197847
};
\addplot [semithick, black, mark=*, mark size=2, mark options={solid}, only marks]
table {%
23.572548971 31.529184022
};
\addplot [semithick, black, mark=*, mark size=2, mark options={solid}, only marks]
table {%
40.867253869 38.565884488
};
\addplot [semithick, black, mark=*, mark size=2, mark options={solid}, only marks]
table {%
27.520840478 46.921515986
};
\addplot [semithick, black, mark=*, mark size=2, mark options={solid}, only marks]
table {%
36.067877874 44.894490748
};
\addplot [semithick, black, mark=*, mark size=2, mark options={solid}, only marks]
table {%
19.345710923 51.97204094
};
\end{axis}

\end{tikzpicture}
\end{center}
\caption{Solutions of an instance of the AP dataset.\label{fig0}}
\end{figure}
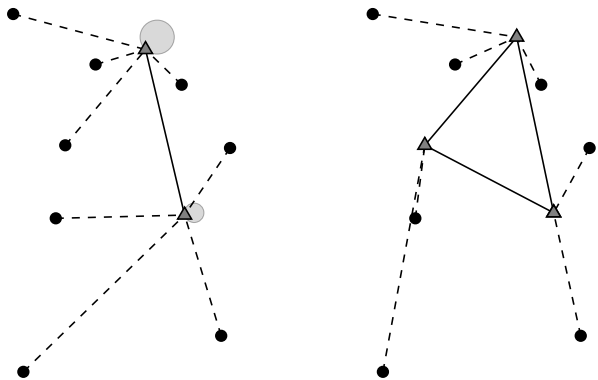

\section{Mathematical Programming Formulations}\label{sec:3}

In this section we develop alternative mathematical programming formulations for the UHLPN. The first one is a Mixed-Integer Non-Linear Programming (MINLP) formulation, which includes bilinear and trilinear terms in the objective function. These terms come from the products of  binary variables with  binary and/or continuous variables. In addition, it also includes non-linear terms in the constraints  modeling  the membership of hubs to their dilated neighbourhoods. Then we present two reformulations. The first one introduces additional decision variables in order to linearize the non-linear terms of the original formulation whereas the second one is obtained from the aggregation of some of the decision variables introduced for the first reformulation.

\subsection*{Decision variables}

All our formulations use the following common sets of decision variables:

\begin{itemize}
\item \textit{Location-allocation variables.} They determine the nodes selected for locating hubs, as well as the allocation of nodes to open hubs. For every edge $\{i, k\} \in E$:
$$
z_{ik} = \left\{\begin{array}{cl}
1 & \mbox{if node $i$ is allocated to hub $k$, for $i\neq k$},\\
0 & \mbox{otherwise,}
\end{array}\right.
$$
and
$$
y_{k} = \left\{\begin{array}{cl}
1 & \mbox{if node $k$ is an open hub},\\
0 & \mbox{otherwise}
\end{array}\right..
$$
\item \textit{Radii of the neighbourhoods of the selected hubs.} We consider variables $r_i\in \mathbb{R}_+$, for $i=1,\ldots,n$. Each $r_i$  determines the dilation factor applied to the neighbourhood of the potential hub node $i$  if it were activated (and zero otherwise).\\

\item \textit{\textit{Positions of hubs within the dilated neighbourhoods associated with the open hubs.} The position of hub $k\in N$ is given by $x_{k}  \in \R^d$.}

\end{itemize}

Using the above decision variables the routing costs can be expressed as:
$$
f_{\rm rout} (z,x) = \dsum_{\{i,k\}\in E} (O_i\d^C(a_i, x_{k})  z_{ik}+ D_i\d^D(a_i, x_{k}))  z_{ik} + \dsum_{k, m\in N} \dsum_{i,j\in N:\atop\{i,k\}, \{j,m\} \in E} w_{ij} \d^H(x_{k},x_{m}) z_{ik} z_{jm}.
$$
The term $\dsum_{\{i,k\}\in E} O_i\d^C(a_i, x_{k})  z_{ik}$ accounts for the collection costs induced for sending flows from non-hub nodes (spokes) to their allocated hubs. The term $\dsum_{\{i,k\}\in E} D_i\d^D(a_i, x_{k}) z_{ik}$ accounts for the distribution costs due to sending flows from hubs to the (final)  spokes assigned to them. The last term determines the inter-hub transportation costs: each flow $w_{ij}$ associated with the OD pair $(i, j)$ incurs an inter-hub cost, which depends on the hubs to which $i$ and $j$ are allocated. Note that the inter-hub transportation cost of an OD pair $(i, j)$ in which $i$ and $j$ are allocated to the same hub is zero.

Furthermore, the set-up costs for the activated hubs, taking into account the radii of their associated dilated neighborhoods is given by
$$
f_{\rm set-up} (y,r) =  \dsum_{k\in N} F_k(r_k) y_k.
$$
Thus, the overall objective function of the problem is:
$$
f(z,y,r,x) = f_{\rm rout} (z,x)  +f_{\rm set-up} (y,r)
$$
A mathematical programming formulation for the problem, based on the one proposed by O'Kelly~\cite{OKelly_EJOR87} is:
\begin{align}
\min & \;\; f(z,r,x)\nonumber \\
\mbox{s.t. } & \dsum_{k \in N: \atop \{i,k\} \in E} z_{ik} =1, &&\forall i \in N,\label{uf0:c1}\\
%& z_{ik} \leq y_k, &&\forall \{i, k\} \in E,\label{uf0:c2}\\
& z_{ik} \leq y_{k}, &&\forall \{i, k\} \in E,\label{uf0:c2}\\
&  x_{k} \in \mathcal{N}_k(r_k), &&\forall k \in N,\label{uf0:c3}\\
& z_{ik}, y_k \in \{0,1\}, &&\forall \{i, k\} \in E,\nonumber \\
& 0 \leq r_k \leq R_k, &&\forall k\in N. \nonumber
\end{align}
Constraints \eqref{uf0:c1} ensure that each spoke is assigned to a single hub node, while constraints \eqref{uf0:c2} state that allocations are only permitted to open hubs. Finally, constraints \eqref{uf0:c3} establish that the access points of assignments to hubs belong to their dilated neighbourhoods. Observe that if node $k\in N$ is not selected as a hub node, its dilation factor $r_k$ becomes zero since otherwise it would incur in a positive set-up cost.

The above model is a Mixed-Integer Non-Linear Programming formulation, not only because the products of $z$-variables in the objective function ($f_{\rm rout}$) but also because the terms $g_k(r_k) y_{k}$ in $f_{\rm set-up}$, the distances between nodes and the membership to the neighborhoods (Constraint \ref{uf0:c3}).

Some observations are in order concerning the $r$-variables and their representation in the objective function. First, observe that the non-linear term  $g_k(r_k) y_{k}$ appears in the objective function to represent the cost induced by the actual size of the dilated neighbourhood associated with hub $k$. However,  in case $g_k$ is a non decreasing function, one can replace the term $g_k(r_k) y_{k}$ by just $g_k(r_k)$. This can be done because the minimization criterion and the non-negativity of $g_k$ guarantee that in case hub $k$ is not open, $g_k(r_k)$ will take the smallest possible value, i.e., zero. Otherwise, $y_{k}=1$, so  $g_k(r_k) y_{k}=g_k(r_k)$. Thus, for the sake of simplicity, we assume from now on that $g_k$ is a non decreasing function, although one could apply similar strategies for general choices of $g_k$.

As mentioned above, different shapes are possible for the functions $g_k$. In case $g_k$ is a linear function, the overall function $f_{\rm set-up}$ is linear. If $g_k(r) = \Lambda_k r^d$, the term $r_k^d$ in the objective function  can be adequately rewritten as a linear function by adding a new auxiliary variable and representing it as a set of second order cone constraints. In particular, denoting by $\gamma_k$ the $d$-th power of $r_k$, i.e., $\gamma_k=r_k^d$, we get that $f_{set-up}(y,r) =  \dsum_{k\in N} f_k y_k + \dsum_{k\in N} \Lambda_k\gamma_k$. Then, constraints in the form $\gamma_k \geq r_k^d$ allows us to represent such a term in the mathematical programming model as a small number of rotated second order cone constraints. The next result reformulates the $d$-th powers in the objective function by means of a set of $O(n d)$  new variables and second order cone constraint.
\begin{lem}
Let $\alpha = (\alpha_0, \ldots, \alpha_{q-1}) \in \{0,1\}^q$ be the coefficients of the binary decomposition of $d-1$, i.e., $d-1 = \alpha_0 2^0 + \alpha_1 2^1 + \cdots, + \alpha_{q-1} 2^{q-1}$ with $q\in \Z_+$ such that $2^{q-1} \leq d < 2^q$. Then, for each $k\in N$, constraint $\gamma_k \geq r_k^d$ can be equivalently represented as the following set of $q$ second order cone/linear  constraints:
\begin{align*}
r_k^2 &\leq \omega_{q-1} \cdot r_k^{(1-\alpha_{q-1})},\\
\omega_{i+1}^2 & \leq \omega_i \cdot r_k^{(1-\alpha_i)}, \text{ for } i=1, \ldots, q-2,\\
\omega_1^2 &\leq \gamma_k \cdot r^{(1-\alpha_0)},
\end{align*}
where $\omega_1, \ldots, \omega_{q-1}$ are nonnegative auxiliary variables.
\end{lem}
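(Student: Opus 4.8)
The plan is to show that, for a fixed $k\in N$, the set of pairs $(r_k,\gamma_k)\in\R_+\times\R_+$ for which there exist nonnegative $\omega_1,\dots,\omega_{q-1}$ satisfying the listed system coincides with $\{(r_k,\gamma_k): \gamma_k\ge r_k^{d}\}$. To streamline the exponent bookkeeping, I would first rewrite the $q$ constraints as a single uniform family by setting $\omega_q:=r_k$ and $\omega_0:=\gamma_k$; every constraint then reads $\omega_{i+1}^2\le \omega_i\, r_k^{\,1-\alpha_i}$ for $i=0,1,\dots,q-1$, which is a rotated second-order cone constraint (and, when $\alpha_i=1$ so that $r_k^{0}$ collapses to the constant $1$, a plain SOC/linear constraint), thereby also justifying the claimed representability.

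The engine of the argument is a single exponent recurrence. I would introduce $d_q:=1$ and $d_i:=2d_{i+1}-(1-\alpha_i)$ for $i=q-1,\dots,0$; unrolling this linear recurrence gives $d_0 = 2^{q}d_q + \sum_{i=0}^{q-1}2^{i}(\alpha_i-1) = 2^{q} + (d-1) - (2^{q}-1) = d$, where the middle equality is exactly the binary-decomposition hypothesis $\sum_{i}\alpha_i 2^{i}=d-1$ combined with $\sum_{i}2^{i}=2^{q}-1$. This identity is the crux: it certifies that the telescoped product of the constraints accumulates a total power of $r_k$ equal to $d$.

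For the inclusion $\{\gamma_k\ge r_k^d\}\subseteq$ projection (the \emph{construction} direction), when $r_k>0$ I would set $\omega_i:=r_k^{d_i}$; by the recurrence every constraint with $i\ge 1$ holds with equality, while the $i=0$ constraint reads $r_k^{2d_1}\le \gamma_k\, r_k^{1-\alpha_0}$, i.e.\ $r_k^{d}\le \gamma_k$, which holds by hypothesis. When $r_k=0$ I would take all $\omega_i=0$ and check that each inequality reduces to $0\le 0$ or $0\le \gamma_k\, r_k^{1-\alpha_0}$, true because $\gamma_k\ge 0$. For the reverse inclusion I would argue by downward induction on $i$ that any feasible point with $r_k>0$ satisfies $\omega_i\ge r_k^{d_i}$: the base case $\omega_q=r_k=r_k^{d_q}$ is immediate, and dividing $\omega_{i+1}^2\le\omega_i\, r_k^{1-\alpha_i}$ by the positive quantity $r_k^{1-\alpha_i}$ and inserting $\omega_{i+1}\ge r_k^{d_{i+1}}>0$ yields $\omega_i\ge r_k^{\,2d_{i+1}-(1-\alpha_i)}=r_k^{d_i}$; at $i=0$ this is precisely $\gamma_k\ge r_k^{d}$. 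The case $r_k=0$ gives $\gamma_k\ge 0=r_k^d$ trivially.

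The main obstacle I anticipate is not any single inequality but the exponent accounting just described, namely verifying that the digit-indexed factors $r_k^{1-\alpha_i}$ telescope to the \emph{exact} power $d$; once the recurrence for $d_i$ is solved, both inclusions are routine. A secondary point requiring care is the degenerate analysis at $r_k=0$, where the convention $r_k^{0}=1$ and the impossibility of dividing by $r_k$ force the separate treatment above, together with the small case $q=1$ (an empty $\omega$-block), which the uniform reindexing $\omega_q:=r_k$, $\omega_0:=\gamma_k$ absorbs without special casing.
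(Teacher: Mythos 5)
Your proof is correct and follows essentially the same route as the paper's: both telescope the chain of rotated second-order cone constraints so that the binary decomposition of $d-1$ makes the accumulated powers of $r_k$ collapse to exactly $d$ (your recurrence $d_i = 2d_{i+1}-(1-\alpha_i)$ is explicit bookkeeping for the paper's chained inequality $r^{2^q}\le \cdots \le \gamma\, r^{2^q-d}$). If anything, yours is the more complete write-up: the paper only details the direction in which the constraints imply $\gamma_k \ge r_k^d$ and defers the rest to Lemma 1 of the cited reference, whereas you exhibit the feasible certificate $\omega_i = r_k^{d_i}$ for the converse inclusion and treat the degenerate case $r_k=0$ explicitly.
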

\begin{proof}
The proof follows by using Lemma 1 in \cite{BPE14} to represent constraint $r^{2^q} \leq \gamma r^{2^q - d}$ (which is equivalent to the one to be represented). First, the binary representations of $1$, $d-1$ and $2^q -1$ are computed, i.e.,
\begin{align*}
1 &= 1 \times 2^0 + 0 \times 2^1 + \cdots + 0 \times 2^{q-1},\\
d-1 &= \alpha_0 \times 2^0 + \alpha_1 \times 2^1 + \cdots + \alpha_{q-1} \times 2^{q-1},\\
2^{q}-d &= \beta_0 \times 2^0 + \beta_1 \times 2^1 + \cdots + \beta_{q-1} \times 2^{q-1}.
\end{align*}
Observe that the above decompositions allow to represent $2^q$ not only as $1 + \dsum_{i=1}^{q-1} 2^i$ but also as:
$$
2^q = (1+\alpha_0+\beta_0)+ (\alpha_1+\beta_1)\times 2 + (\alpha_2+\beta_2)\times 2^2 + \cdots + (\alpha_{q-1}+\beta_{q-1}) \times 2^{q-1}
$$
Following similar arguments to those explicitly detailed in \cite{BPE14}, we get that $\alpha_i+\beta_i=1$ for all $i=0, \ldots, q-1$. Thus, one can represent constraint $r^{2^q} \leq \gamma r^{2^q - d}$ by concatenating the $q$ rotated second order cone constraints stated in the result and then:
\begin{eqnarray*}
\begin{split}
r^{2^q} &\leq w_{q-1}^{2^{q-1}} r^{2^{q-1} (1-\alpha_{q-1})} \leq w_{q-2}^{2^{q-2}} r^{2^{q-1}(1-\alpha_{q-1}) + 2^{q-2}(1-\alpha_{q-2})} \\
&\leq \cdots \leq w_1^{2} r^{2^{q-1}(1-\alpha_{q-1}) + 2^{q-2}(1-\alpha_{q-2}) + \cdots + 2(1-\alpha_1)}\\
& \leq \gamma  r^{2^{q-1}(1-\alpha_{q-1}) + 2^{q-2}(1-\alpha_{q-2}) + \cdots + 2^1(1-\alpha_1) + 2^0(1-\alpha_0)}\\
& = \gamma  r^{2^q - 1} r^{- \sum_{i=0}^{q-1} \alpha_i 2^i} = \gamma r^{2^q-1} r^{-(d-1)}= \gamma r^{2^q - d}
\end{split}
\end{eqnarray*}

Thus, it implies that $r^d \leq \gamma$.

\end{proof}

Using the above result one can easily represent volumes of the neighborhoods. For instance,  for $d=3$ and $d=4$, the terms $r_k^3$ or $r^4_k$ can be represented by means of the following set of SOC-inequalities and auxiliary variables:
\begin{center}
\begin{multicols}{2}
$\left\{\begin{array}{rl}
\gamma_k r_k &\geq \omega_1^2\\
\omega_1 &\geq r_k^2
\end{array}\right.$

\columnbreak

$\left\{\begin{array}{rl}
\gamma_k &\geq \omega_1^2\\
\omega_1 &\geq \omega_2^2\\
\omega_2 r_k &\geq r_k^2
\end{array}\right..$
\end{multicols}
\end{center}

The following result states a geometrical property of the optimal solutions of the UHLPN.
\begin{prop}
Optimal locations of the hubs ($x$-variables) must belong to the boundary of the selected neighborhoods.
\end{prop}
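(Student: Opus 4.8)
The plan is to exploit the fact that the only place where a dilation factor $r_k$ enters the model, besides the membership constraint \eqref{uf0:c3}, is through the non-decreasing set-up term $g_k(r_k)$. Hence at optimality each $r_k$ will be driven down to the smallest value compatible with the chosen hub position $x_k$, and that smallest value is precisely the one placing $x_k$ on the boundary of $\mathcal{N}_k(r_k)$. Concretely, I would fix an optimal solution $(z,y,r,x)$ and argue hub by hub. For an open hub $k$ (i.e. $y_k=1$) I introduce the gauge (Minkowski functional) of the basic neighbourhood,
\[
\gamma_{S_k}(v) = \inf\{t \ge 0 : v \in t\,S_k\},
\]
which is well defined and finite because $S_k$ is a convex (SOC-representable) set containing the origin. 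The membership $x_k \in \mathcal{N}_k(r_k)$ is equivalent to $x_k - a_k \in r_k S_k$, i.e. to $\gamma_{S_k}(x_k-a_k) \le r_k$; moreover $x_k$ lies on the boundary of $\mathcal{N}_k(r_k)$ exactly when $\gamma_{S_k}(x_k - a_k)=r_k$, and in the interior when the inequality is strict.

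The core of the argument is a local exchange. Suppose $x_k$ were interior to $\mathcal{N}_k(r_k)$, so that $r_k' := \gamma_{S_k}(x_k-a_k) < r_k$. Replacing $r_k$ by $r_k'$ and leaving $z$, $y$, all other radii, and all positions $x$ untouched keeps the solution feasible: constraint \eqref{uf0:c3} still holds by construction, and $0 \le r_k' < r_k \le R_k$. The routing cost $f_{\mathrm{rout}}(z,x)$ is unchanged, since it depends on $x$ but not on $r$, whereas $f_{\mathrm{set\text{-}up}}$ changes only in its $k$-th summand, decreasing from $g_k(r_k)$ to $g_k(r_k')\le g_k(r_k)$ because $g_k$ is non-decreasing. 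Thus the modified solution has total cost no larger than the original optimum, so it is again optimal and now has $x_k$ on the boundary of its neighbourhood; iterating over all open hubs produces an optimal solution in which every hub lies on the boundary of its selected neighbourhood. If in addition one assumes $g_k$ to be strictly increasing, the inequality $g_k(r_k')<g_k(r_k)$ becomes strict and contradicts optimality directly, which upgrades the conclusion to ``every optimal solution has its hubs on the boundary.''

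The step I expect to need the most care is the topological identification used above: that $\gamma_{S_k}(x_k-a_k)=r_k$ genuinely corresponds to $x_k \in \partial\mathcal{N}_k(r_k)$, and that the infimum defining the gauge is attained. Both facts follow from the standard theory of gauges of closed convex sets with the origin in their interior; for the guiding case in which $S_k$ is the unit ball of a norm this is transparent, since then $\gamma_{S_k}=\|\cdot\|$ and the boundary is the sphere $\{x : \|x-a_k\|=r_k\}$. A minor degenerate case to dispatch separately is $x_k=a_k$, which forces $r_k=0$: then $\mathcal{N}_k(0)=\{a_k\}$ is a single point, which coincides with its own (relative) boundary, so the statement holds trivially.
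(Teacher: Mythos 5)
Your proof is correct and follows essentially the same route as the paper's: assume a hub sits in the interior of its dilated neighbourhood, shrink the dilation factor to the smallest feasible value (you formalize this with the gauge $\gamma_{S_k}$, the paper with an ad hoc $\varepsilon_k$), observe that feasibility and the routing cost are unaffected while the set-up term does not increase, and conclude. In fact you are slightly more careful than the paper, which claims a strict decrease from the weak inequality $g_k(r_k^*)\geq g_k(r_k')$; your explicit distinction between the non-decreasing case (some optimal solution has hubs on the boundary) and the strictly increasing case (every optimal solution does) is the honest reading of the statement.
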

\begin{proof}
Let $(x^*, r^*, z^*)$ be the optimal solution for UHLPN with objective value $f^*$. Assume that there is some $k\in N$ such that $z^*_{kk}=1$ and $x_k^* \in {\rm int}(\mathcal{N}_k(r^*_k))$. Thus, there exists $0<\varepsilon_k<r^*_k$ such that $x_k^* \in \mathcal{N}(r_k^*-\varepsilon)$. Let us define $r^\prime = (r_1^*, \ldots, r^*_{k-1}, r^*_k-\varepsilon_k, r^*_{k+1}, \ldots, r_n^*)$. Clearly, $(x^*, r^\prime, z^*)$ is still feasible and its objective function is smaller than $f^*$ since it only affects the term $g_k$ and $g_k(r^*_k) \geq g_k(r^\prime_k)$, contradicting the optimality of $(x^*, r^*, z^*)$.
\end{proof}

\subsection{First Reformulation}\label{f1}

Next we present a reformulation of the UHLPN described in the section above based on the linearization of the non-linear terms of the objective function.\\
Indeed, the bilinear and trilinear terms in the objective function can  be linearized by introducing auxiliary decision variables. In particular, for all $\{i,k\}, \{j,m\}, \{k,m\} \in E$ let:
$$
\eta^C_{ik} = \d^C(a_i, x_{k}) z_{ik}, \;\; \eta^D_{ik} =\d^D(a_i, x_{k})  z_{ik} \mbox{ and } \nu_{ikjm} = \d^H(x_{k},x_{m}) z_{ik} z_{jm}.
$$

Using the new set of decision variables, the UHLPN reduces to:

\begin{align}
\min \, & \dsum_{i,k\in N}  (O_i \eta^C_{ik} + D_i \eta^D_{ik}) +\dsum_{i,j\in N} w_{ij}  \dsum_{\{i,k\},\{j,m\},\{k,m\} \in E} \nu_{ikjm} &+ \dsum_{k\in N} f_k y_k + \dsum_{k\in N} g_k(r_k)\nonumber\\
\mbox{s.t. } & \eqref{uf0:c1}, \eqref{uf0:c2}, \eqref{uf0:c3},\nonumber\\
& \eta^C_{ik} \geq \d^C(a_i, x_{k})- \hat{D}^C_{ik} (1-z_{ik}), &\forall \{i, k\} \in E,\label{uf1:c1}\\
& \eta^D_{ik} \geq \d^D(x_k, a_i) - \hat{D}^D_{ik} (1-z_{ik}), &\forall \{i, k\} \in E,\label{uf1:c2}\\
& \nu_{ikjm} \geq   \d^H(x_{k},x_{m}) - \hat{D}^H_{km}(2- z_{ik}-z_{jm}), &\forall \{i, k\}, \{j,m\}, \{k,m\} \in E,\label{uf1:c4}\\
& \eta^C_{ik}, \eta^D_{ik}, \nu_{ikjm} \geq 0,  &\forall \{i, k\}, \{j,m\}, \{k,m\} \in E,\nonumber\\
& z_{ik}, y_k \in \{0,1\},  &\forall \{i, k\} \in E, k \in N, \nonumber\\
& 0 \leq  r_k \leq R_k, &\forall k\in N,\nonumber
\end{align}

\noindent where constraints \eqref{uf1:c1}-\eqref{uf1:c4} are linearizations of the products $\d^C(a_i, x_{k})z_{ik}$, $\d^D(a_i, x_{k})z_{ik}$ and $\d^H(x_{k},x_{m})z_{ik}z_{jm}$, in which the constants $\hat{D}^C_{ik}$, $\hat{D}^C_{ik}$ and  $\hat{D}^H_{ik}$ are upper bounds on the distance from nodes $a_i$ to $x_k$ for the collection, distribution and transportation cost functions, respectively. These bounds depend on the distances used for the cost functions. In particular, for Euclidean distances, one can choose $\hat{D}^C_{ik} = \d^C(a_i,a_k) + 2R_k$, $\hat{D}^D_{ik} = \d^D(a_i,a_k) + 2R_k$ and $\hat{D}^H_{km} = \d^H(a_k,a_m) + R_k + R_m$.

Observe that the set of constraints \eqref{uf1:c4} can be alternatively rewritten as:
\begin{align}
\nu_{ikjm} \geq   \d^H(x_{k},x_{m}) - \hat{D}^H_{km}(1- z_{ik}), &&\forall \{i, k\}, \{j,m\}, \{k,m\} \in E,\label{uf1:c41}\\
\nu_{ikjm} \geq   \d^H(x_{k},x_{m}) - \hat{D}^H_{km}(1- z_{jm}), &&\forall \{i, k\}, \{j,m\}, \{k,m\} \in E.\label{uf1:c42}
\end{align}

However, the above formulation is not suitable to be solved \emph{directly} by any of the available MISOCO solvers since \eqref{uf1:c1} -\eqref{uf1:c4} are not rigorously speaking SOC constraints. Nevertheless, one can introduce adequate sets of nonnegative auxiliary variables for the distances, $\d^C(a_i, x_{k})$, $\d^D(x_k, a_i)$ and $\d^H(x_k,x_m)$ in those constraints, $d_{ik}^C, d_{ik}^D$ and $d_{km}^H$, respectively, for $\{i,k\},  \{k,m\} \in E$, and then rewrite \eqref{uf1:c1} -\eqref{uf1:c4} as:
\begin{align}
d_{ik}^C \geq \d^C(a_i, x_{k}), &\forall \{i,k\} \in E,\label{cc:1}\\
d_{ik}^D \geq \d^D(a_i, x_{k}), &\forall \{i,k\} \in E,\label{cc:2}\\
d_{km}^H \geq \d^H(x_k, x_m), &\forall \{i,k\} \in E,\label{cc:3}\\
\eta^C_{ik} \geq \d^C_{ik}- \hat{D}^C_{ik} (1-z_{ik}), &\forall \{i, k\} \in E,\label{cc:4}\\
\eta^D_{ik} \geq \d^D_{ik} - \hat{D}^D_{ik} (1-z_{ik}), &\forall \{i, k\} \in E,\label{cc:5}\\
\nu_{ikjm} \geq   \d^H_{km} - \hat{D}^H_{km}(2- z_{ik}-z_{jm}), &\forall \{i, k\}, \{j,m\}, \{k,m\} \in E,\label{cc:6}\\
d_{ik}^C , d_{ik}^D , d_{km}^H \geq 0,  &\forall \{i, k\}, \{k,m\} \in E.\nonumber
\end{align}
where now, \eqref{cc:1}-\eqref{cc:3} are SOC-representable (see e.g., \cite{BPE14}) and \eqref{cc:4}-\eqref{cc:6} are linear constraints.

The above formulation has $O(|E|^2)$ variables, $O(|E|^2)$  linear constraints and $O(|E|)$ non-linear constraints.

Note that in case $\d^C = \d^D$, a single set of $\eta$-variables, instead of $\eta^C$ and $\eta^D$, can be used to linearize the formulations.

In \cite{AV98} the authors propose a multicomodity-flow based formulation for the Uncapacitated Hub Location problem together with a branch and bound strategy for solving it. This formulation directly applies to our problem after introducing the following set of binary variables:
$$
p_{ijkm} = \left\{\begin{array}{cl}
1 & \mbox{if the arc $(k,m)$ is used to route the flow between $i$ and $j$,}\\
0 & \mbox{otherwise}.
\end{array}\right.
$$
for $\{k,m\}\in E$.

Thus inducing a formulation with $O(|E|^2)$ binary variables in contrast to the $O(|E|)$ binary variables in our formulation.

\subsection{Second Reformulation}\label{f2}

As in the previous formulation,  we consider, again, the variables $\eta^C_{ik}$ and $\eta_{ik}^D$ to represent the products of the collection/distribution distances and the allocation variables. Now, instead of the four-index variables $\nu_{ikjm}$, we consider two-index variables $\mu_{km}$, which represent the aggregated value of the overall cost flow on a given inter-hub arc $(k, m)$, i.e.,
$$
\mu_{km} =  \dsum_{i,j\in N:\atop \{i,k\}, \{j,m\} \in E}  w_{ij} \d^H(x_{k},x_{m}) z_{ik}z_{jm},
$$
for all $\{k, m\} \in E$.

The resulting equivalent reformulation is the following:

\begin{align}
\min & \dsum_{\{i,k\}\in E}  (O_i \eta^C_{ik} + D_i \eta^D_{ik}) + \dsum_{\{k, m\}\in E} \mu_{km} + \dsum_{k\in N} f_k y_k + \dsum_{k\in N} g_k(r_k) & \nonumber\\
\mbox{s.t. } & \eqref{uf0:c1}, \eqref{uf0:c2}, \eqref{uf0:c3}, \eqref{uf1:c1}, \eqref{uf1:c2}, \nonumber\\
& \mu_{km} \geq \dsum_{i,j\in N}  w_{ij} \d^H(x_{k},x_{m}) z_{ik}z_{jm}, & \forall \{k, m\} \in E,\label{uf2:exp0}\\
& \eta^C_{ik}, \eta^D_{ik} \geq 0,  &\forall \{i, k\}, \{j , m\} \in E,\nonumber\\
& z_{ik}, y_k \in \{0,1\}, & \forall \{i,k\} \in E, k \in N, \nonumber\\
& 0 \leq  r_k \leq R_k, & \forall k\in N,\nonumber\\
& \mu_{km} \geq 0, & \forall \{k, m\} \in E.\nonumber
\end{align}

\noindent Observe that, taking into account the non-negativity of variables and coefficients, and the minimization objective function, it is guaranteed that for optimal solutions equality will hold for constraints \eqref{uf2:exp0}, which establishes the objective function value for the routing of inter-hub flows.

Note also that the trilinear terms of constraints \eqref{uf2:exp0} make them difficult to handle algorithmically. Below we propose an equivalent set of linear constraints, which can be used for modeling the  values of variables $\mu_{km}$. First, in Proposition \ref{propo:1} we give a linearization of the bilinear products of $z$ variables.

\begin{prop}\label{propo:1}
The set of nonlinear constraints \eqref{uf2:exp0} is equivalent to the following set of inequalities:
\begin{equation}\label{uf2:exp}
\mu_{km} \geq \d^H(x_{k},x_{m})\,\dsum_{(i,j) \in S}  w_{ij} (z_{ik}+z_{jm}-1), \;\;  \forall S \subseteq N_{km}, \forall \{k, m\} \in E.
\end{equation}
where $N_{km}  = \{(i,j) \in N\times N: \{i,k\}, \{j,m\} \in E\}$.
\end{prop}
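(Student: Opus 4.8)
The plan is to exploit the fact that the $z$-variables are binary, so that, for fixed values of $z$ and $x$, the trilinear right-hand side of \eqref{uf2:exp0} can be reconstructed as the largest member of the exponential family \eqref{uf2:exp}. The cornerstone is the elementary pointwise bound
\[
z_{ik}+z_{jm}-1 \;\le\; z_{ik}\,z_{jm}, \qquad \forall\, z_{ik},\, z_{jm}\in\{0,1\},
\]
which is an equality precisely when at least one of $z_{ik}, z_{jm}$ equals one, and is strict only when both vanish. Since $w_{ij}\ge 0$ and $\d^H(x_k,x_m)\ge 0$, multiplying this bound by $w_{ij}\,\d^H(x_k,x_m)$ and summing over any $S\subseteq N_{km}$ preserves the inequality direction, which is what makes the whole argument work.

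For the forward implication I would assume that \eqref{uf2:exp0} holds and show that every inequality in \eqref{uf2:exp} is then automatically satisfied. Fixing $\{k,m\}\in E$ and any $S\subseteq N_{km}$, the pointwise bound gives, term by term,
\[
\d^H(x_k,x_m)\dsum_{(i,j)\in S} w_{ij}(z_{ik}+z_{jm}-1)
\;\le\;
\d^H(x_k,x_m)\dsum_{(i,j)\in S} w_{ij}\,z_{ik}z_{jm}
\;\le\;
\d^H(x_k,x_m)\dsum_{(i,j)\in N_{km}} w_{ij}\,z_{ik}z_{jm},
\]
where the last step only drops nonnegative summands because $S\subseteq N_{km}$. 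The right-most quantity equals the right-hand side of \eqref{uf2:exp0}, which is in turn bounded above by $\mu_{km}$; hence \eqref{uf2:exp} holds for this $S$.

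For the converse I would single out the ``most violated'' set $S^\star=\{(i,j)\in N_{km}:\, z_{ik}=z_{jm}=1\}$. On $S^\star$ the pointwise bound is tight, $z_{ik}+z_{jm}-1=1=z_{ik}z_{jm}$, while off $S^\star$ the product $z_{ik}z_{jm}$ vanishes; therefore the inequality \eqref{uf2:exp} evaluated at $S=S^\star$ reads exactly $\mu_{km}\ge \d^H(x_k,x_m)\dsum_{i,j\in N} w_{ij}\,z_{ik}z_{jm}$, i.e.\ \eqref{uf2:exp0}. Thus the validity of the whole family \eqref{uf2:exp} forces \eqref{uf2:exp0}, and the two descriptions cut out the same set of feasible triples $(x,z,\mu)$.

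The step I expect to require the most care is the bookkeeping that guarantees the choice $S=S^\star$ genuinely reproduces the aggregated trilinear term: one must verify that the discarded pairs (those with $z_{ik}z_{jm}=0$) contribute nothing to \eqref{uf2:exp0} and that including any of them in $S$ could only weaken \eqref{uf2:exp}, which is precisely where the sign conditions $w_{ij}\ge 0$ and $\d^H(x_k,x_m)\ge 0$ enter. Note that no continuity or convexity of $\d^H$ is needed, since $x$ is held fixed throughout and enters only through the nonnegative scalar $\d^H(x_k,x_m)$.
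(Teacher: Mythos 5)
Your proof is correct and follows essentially the same route as the paper: the forward direction restricts the sum to $S$ using the McCormick-type bound $z_{ik}+z_{jm}-1\le z_{ik}z_{jm}$ together with the nonnegativity of $w_{ij}$ and $\d^H(x_k,x_m)$, and the converse recovers \eqref{uf2:exp0} by instantiating \eqref{uf2:exp} at the set $S^\star=\{(i,j)\in N_{km}: z_{ik}=z_{jm}=1\}$, exactly as in the paper. The only difference is presentational — you state the pointwise bound explicitly up front, whereas the paper folds it into a chain of inequalities — so no further changes are needed.
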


\begin{proof}

Let $(\bar z,\bar \mu) \in \{0,1\}^{|E|} \times \R^{|E|}_+$. Assume first that $(\bar z,\bar \mu)$ satisfies \eqref{uf2:exp0}. Let $\{k, m\} \in E$  and $S \subseteq N_{km}$. Then:
\begin{eqnarray*}
\begin{split}
\bar \mu_{km} &\stackrel{\eqref{uf2:exp0}}{\geq}   \d^H(x_{k},x_{m}) \dsum_{(i,j) \in N_{km}} w_{ij} \bar z_{ik} \bar z_{jm} = \d^H(x_{k},x_{m})\dsum_{(i,j)\in N_{km}:\atop \bar z_{ik}=\bar z_{jm}=1} w_{ij}(\bar z_{ik}+\bar z_{jm}-1)\\
& \geq  \d^H(x_{k},x_{m}) \dsum_{(i,j)\in S:\atop \bar z_{ik}=\bar z_{jm}=1} w_{ij} \bar z_{ik} \bar z_{jm}  \geq \d^H(x_{k},x_{m})\dsum_{(i,j)\in S} w_{ij} (\bar z_{ik}+ \bar z_{jm}-1).
\end{split}
\end{eqnarray*}

Hence, \eqref{uf2:exp} is verified.\\

On the other hand, if $(\bar z,\bar \mu)$ satisfies \eqref{uf2:exp}, for all $\{k, m\} \in E$ and $S \subseteq N_{km}$, then, in particular, chosing $\overline S=\{(i,j)\in N_{km}: \bar z_{ik}=\bar z_{km}=1\}$, we have that
\begin{eqnarray*}
\begin{split}
\bar \mu_{km} &\stackrel{\eqref{uf2:exp}}{\geq} \d^H(x_{k},x_{m}) \dsum_{(i,j) \in \overline S}  w_{ij} (\bar z_{ik}+\bar z_{jm}-1) =\d^H(x_{k},x_{m}) \dsum_{(i,j) \in \overline S}  w_{ij} \\
&= \d^H(x_{k},x_{m}) \dsum_{(i,j)\in N_{km}}  w_{ij} \bar z_{ik} \bar z_{jm},
\end{split}
\end{eqnarray*}
\noindent so \eqref{uf2:exp0} is also verified.
\end{proof}
Observe that the family \eqref{uf2:exp} may have an exponential number of constraints (as in complete hub-and-spoke networks), namely $|E| \times 2^{|E|}$ inequalities, which are nonlinear because of the products $\d^H(x_{k},x_{m}) \times z_{ik}$. Note that since constraint \eqref{uf2:exp} is only active in case the sets $S$ are adequately chosen as the whole pairs of users, $i$ and $j$ in $N$, linked to $k$ and $m$, respectively, for $\{k,m\} \in E$ (which are unknown), one cannot avoid the use of the exponentially many constraints.

As stated in the following result, those terms can be suitably linearized by introducing additional decision variables and applying McKormick transformation~\cite{McCormick76}.

\begin{prop}
Nonlinear constraints \eqref{uf2:exp} can be replaced by the following set of inequalities:
\begin{align}
\mu_{km} &\geq \!\!\!\!\dsum_{(i,j) \in S}  w_{ij} (\theta_{ikm}+\theta_{jmk}-\d^H(x_{k},x_{m})), \;\; \forall \{k, m\} \in E, \forall S \subseteq N_{km},\label{uf2:exp1}\\
\theta_{ikm} &\geq \d^H(x_{k},x_{m}) - \hat{D}^H_{km}(1-z_{ik}),  \forall \{i,k\}, \{k,m\} \in E, \label{uf2:Coll}\\%\nonumber\\
\theta_{ikm} &\geq 0,   \forall \{i,k\}, \{k,m\} \in E, \label{uf2:Dom}%\nonumber
\end{align}
where the new variables $\theta_{ikm}$ model the routing cost of the flow through the inter-hub arc $(k,m)$ with origin at $i$ and the coefficient $\hat{D}^H_{km}$ is an upper bound on the distance between the positions of hubs $k$ and $m$.
\end{prop}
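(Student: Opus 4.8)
The plan is to read the auxiliary variables $\theta_{ikm}$ as linearizations of the bilinear products $\d^H(x_{k},x_{m})\,z_{ik}$ and to prove the claimed equivalence as an equality between the projections onto the original variables $(\mu,z,x,r)$ of the feasible regions defined by \eqref{uf2:exp} and by \eqref{uf2:exp1}--\eqref{uf2:Dom}. The whole argument rests on one observation: for binary $z$, constraints \eqref{uf2:Coll}--\eqref{uf2:Dom} form a McCormick envelope that forces
\[
\theta_{ikm}\;\geq\;\d^H(x_{k},x_{m})\,z_{ik},
\]
with this bound attainable. Everything else reduces to substitution and sign-monotonicity.

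First I would verify the bound by the two binary cases. If $z_{ik}=1$, constraint \eqref{uf2:Coll} reads $\theta_{ikm}\geq \d^H(x_{k},x_{m})$, which is already the claim and is consistent with \eqref{uf2:Dom} since $\d^H\geq 0$. If $z_{ik}=0$, constraint \eqref{uf2:Coll} reads $\theta_{ikm}\geq \d^H(x_{k},x_{m})-\hat{D}^H_{km}$; because $\hat{D}^H_{km}$ is by construction an upper bound on $\d^H(x_{k},x_{m})$ over all admissible hub positions, this right-hand side is nonpositive, so the only active bound is $\theta_{ikm}\geq 0=\d^H(x_{k},x_{m})\,z_{ik}$ coming from \eqref{uf2:Dom}. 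In both cases $\theta_{ikm}\geq \d^H(x_{k},x_{m})\,z_{ik}$, and moreover the choice $\theta_{ikm}=\d^H(x_{k},x_{m})\,z_{ik}$ is feasible for \eqref{uf2:Coll}--\eqref{uf2:Dom}. The same holds for $\theta_{jmk}$, using the symmetry $\d^H(x_{m},x_{k})=\d^H(x_{k},x_{m})$ and $\hat{D}^H_{mk}=\hat{D}^H_{km}$ inherited from $E$ being undirected and $\d^H$ a distance, so that $\theta_{jmk}\geq \d^H(x_{k},x_{m})\,z_{jm}$.

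With this in hand both inclusions follow immediately, working summand by summand for each fixed $S$. For the direction ``new $\Rightarrow$ old'', take any $(\mu,z,x,r,\theta)$ satisfying \eqref{uf2:exp1}--\eqref{uf2:Dom}. Since $w_{ij}\geq 0$ and $\theta_{ikm}\geq \d^H(x_{k},x_{m})\,z_{ik}$, $\theta_{jmk}\geq \d^H(x_{k},x_{m})\,z_{jm}$, every summand of \eqref{uf2:exp1} dominates the corresponding summand of \eqref{uf2:exp}, whence
\[
\mu_{km}\;\geq\;\dsum_{(i,j)\in S} w_{ij}\bigl(\theta_{ikm}+\theta_{jmk}-\d^H(x_{k},x_{m})\bigr)\;\geq\;\d^H(x_{k},x_{m})\dsum_{(i,j)\in S} w_{ij}(z_{ik}+z_{jm}-1),
\]
for every $S\subseteq N_{km}$ and $\{k,m\}\in E$, which is exactly \eqref{uf2:exp}. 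For the converse ``old $\Rightarrow$ new'', given $(\mu,z,x,r)$ satisfying \eqref{uf2:exp} I would set $\theta_{ikm}:=\d^H(x_{k},x_{m})\,z_{ik}$; by the previous paragraph this value is feasible for \eqref{uf2:Coll}--\eqref{uf2:Dom}, and substituting it into the right-hand side of \eqref{uf2:exp1} collapses it precisely onto the right-hand side of \eqref{uf2:exp}, which holds by hypothesis. Hence the projected feasible regions coincide; since $\theta$ does not appear in the objective and can always be driven down to its lower bound, optimal values are preserved as well.

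The only genuinely delicate point is the $z_{ik}=0$ case of the McCormick bound: the conclusion that $\theta_{ikm}$ is forced to $0$ relies entirely on $\hat{D}^H_{km}$ being a valid upper bound on $\d^H(x_{k},x_{m})$ for every admissible pair of hub positions (which is why such bounds must be specified, e.g.\ $\hat{D}^H_{km}=\d^H(a_k,a_m)+R_k+R_m$ in the Euclidean case). If $\hat{D}^H_{km}$ failed to dominate the true inter-hub distance, \eqref{uf2:Coll} could impose a strictly positive lower bound on $\theta_{ikm}$ even when $z_{ik}=0$, breaking the identification $\theta_{ikm}=\d^H(x_{k},x_{m})\,z_{ik}$ and thus the equivalence. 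I would therefore state the validity of $\hat{D}^H_{km}$ explicitly at the outset; the remaining manipulations are routine, using only $w_{ij}\geq 0$ and $\d^H\geq 0$.
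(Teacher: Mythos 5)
Your proposal is correct, and it is essentially the argument the paper intends: the paper states this proposition without an explicit proof, appealing only to the McCormick transformation, and your two-case analysis on binary $z_{ik}$ (showing $\theta_{ikm}\geq \d^H(x_k,x_m)z_{ik}$ is both forced and attainable, the latter relying on the validity of the bound $\hat{D}^H_{km}$) together with the two projection inclusions supplies exactly the omitted details. Your explicit flagging of where the validity of $\hat{D}^H_{km}$ is needed is a useful addition rather than a deviation.
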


This reformulation has $O(|N||E|)$ variables, $O(|E|)$ non-linear constraints and exponentially many linear constraints. Then, to take advantage of the smaller number of nonlinear constraints of this representation we propose an incomplete formulation, in which constraints  \eqref{uf2:exp} are added on-the-fly, embedded in a branch-and-cut scheme, as described in the next section.

\section{Branch-and-Cut Solution Algorithm}\label{sec:4}
In this section we develop a branch-and-cut solution algorithm for the UHLPN, based on the reformulation introduced in Section \ref{f2} where the routing costs of access and distribution flows are linearized via constraints \eqref{uf1:c1} and \eqref{uf1:c2}, and the costs of inter-hub flows are linearized using constraints  \eqref{uf2:exp1}-\eqref{uf2:Dom}. Given that the set of constraints \eqref{uf2:exp1} is of exponential size, only a small number of them is considered initially, and the remaining ones are handled as lazy constraints, so they are only separated at the nodes of the enumeration tree where a solution is found with binary values for the $z$. \\
As we show below, in such a case, separation can be carried out by inspection. Moreover, it is also possible to apply a more selective policy in which, instead of generating all violated constraints, only the most violated one is identified and added.

For all $S \subseteq N \times N$, we use the following notation in our approach:
\begin{itemize}
\item[] $S^+= \{i\in N: \exists j \in N \mbox{ with } (i,j) \in S\}$,
\item[] $S^+(i) = \{j \in N: (i,j) \in S\}$ for all $i\in S^+$,
\item[] $S^-= \{j\in N: \exists i \in N \mbox{ with } (i,j) \in S\}$,
\item[] $S^-(j) = \{i \in N: (i,j) \in S\}$ for all $j\in S^-$.
\item[] $O_i(S) = \!\!\!\!\!\!\dsum_{j\in S^-(i)} \!\!\!w_{ij}$ is the overall flow with origin $i$ and destination in $S$,  for all $i\in S^+$,
\item[] $D_j(S) = \!\!\!\!\!\!\dsum_{i \in S^+(j)} \!\!\!w_{ij}$ is the
overall flow with origin in $S$ and destination $j$, for all $j\in S^-$,
\item[] $w(S)=\!\!\!\dsum_{(i,j)\in S} w_{ij}$ is the overall flow over the connections in $S$.
\end{itemize}

With such a notation, for two potential hubs $k, m \in N$ with $\{k,m\}\in E$, and $S\subseteq N_{km}$, constraint \eqref{uf2:exp1} reads as:
\begin{align}
\mu_{km} \geq \dsum_{i \in S^+} O_i(S) \theta_{ikm} + \dsum_{j\in S^-} D_j(S) \theta_{jmk} - w(S)\d^H(x_{k},x_{m}). \label{uf2:exp1_1}
\end{align}

In Algorithm \ref{alg} we describe the pseudocode of the proposed solution scheme. There, we denote by ${\rm UHLPN}(\mathcal{S})$ the problem formulated as in Section \ref{f2} but where only the constraints  \eqref{uf2:exp} involving the sets in the pool $\mathcal{S} \subseteq \{S: S \subset N\times N\}$ are added.

\begin{algorithm}[h]

 \KwData{$\mathcal{S}=\emptyset$, \texttt{violation}=\texttt{true}}

\While{ {\rm \texttt{violation}=\texttt{true}}}{
 \texttt{violation}=\texttt{false}\\
Solve ${\rm UHLPN}(\mathcal{S})$: $(\bar z, \bar r, \bar\mu, \bar x)$.\\
\For{$\{k, m\} \in E$ with $\bar y_k=\bar y_m=1$}{
$S_{km} = \{(i,j) \in N_{km}: \bar z_{ik}=\bar z_{jm}=1\}$.\\
\If{$(\{k,m\}, S_{km})$ violates \eqref{uf2:exp}}{
	$\mathcal{S} \rightarrow \mathcal{S} \cup \{S_{km}\}$\\
	\texttt{violation}=\texttt{true}.
	}}
	}

 \caption{Branch-and-Cut Solution Approach.\label{alg}}
\end{algorithm}

Observe that to check the violation of constraint \eqref{uf2:exp1_1} for fixed $\{k, m\} \in E$ with $\bar y_k=\bar y_m=1$ for a given feasible solution $(\bar x,\bar z, \bar r; \bar \theta, \bar \mu)$,  we consider the following choice for the $S$-set:
$$
S = \{(i,j) \in N_{km}: \bar z_{ik} = \bar z_{jm}=1\}.
$$
Then, \eqref{uf2:exp1_1} is active only whenever $\dsum_{i \in S^+} O_i(S) \bar\theta_{ikm} = \dsum_{j\in S^-} D_j(S) \bar \theta_{jmk} = w(S)\d^H(\bar x_{k}, \bar x_{m})$ (in that case $\bar \mu_{km} = w(S)\d^H(\bar x_{k},\bar x_{m})$) or if $\bar x_k$ and $\bar x_m$ coincide. Otherwise, since $\bar \theta_{ikm} = \d^H(\bar x_{k},\bar x_{m}) \bar z_{ik}$ and $\bar \theta_{jmk} = \d^H(\bar x_{k},\bar x_{m}) \bar z_{jm}$ one would have that $\dsum_{i \in S^+} O_i(S) \neq \dsum_{j \in S^-} D_j(S)$ being then the overall  flow generated at the origins of $S$ different from the overall flow with destination at the destinations of $S$. Furthermore, in case $\dsum_{i \in S^+} O_i(S)= \dsum_{j \in S^-} D_j(S)$, this flow must coincide with the overall flow given by the origins and destinations in $S$.

Clearly, the constraint \eqref{uf2:exp1} with a maximum right-hand-side value for the solution $(\bar x,\bar z, \bar r; \bar \theta, \bar \mu)$,  is the one associated with set $S$. Hence, in order to solve the separation problem one only has to check whether or not the constraint \eqref{uf2:exp1} associated with $S$ is violated, or equivalently, whether or not:
$$
\bar \mu_{km} < \bar d_{km}  w(S),
$$
where $\bar d_{km}=\d^H(\bar  x_{k},\bar x_{m})$ is the routing cost of the inter-hub flows through arc $(\bar x_k, \bar x_m)$.

If the above condition is not met, then the constraint \eqref{uf2:exp1} associated with $S$ is violated, so the following cut is added:
$$
\mu_{km} \geq \dsum_{i \in S^+} O_i(S) \theta_{ikm} + \dsum_{j\in S^-} D_j(S) \theta_{jmk} - w(S)\d^H(x_{k},x_{m}). \label{uf2:exp1_2}
$$
One can also easily check for the most violated inequality, i.e., find $\{\bar k, \bar m\}$ in $\arg\dmax_{k, m \in N}$ $\Big\{\bar d_{km}w(S) - \bar \mu_{km} \Big\}$ with  $d_{\bar k\bar m}w(S) > \bar \mu_{\bar k \bar m}$.

\section{Experiments}\label{sec:5}

We have performed a series of experiments to test and compare the formulations. We have used the most common datasets in hub location: AP (Australia Post) \cite{Ernst96} and CAB (Civil Aeronautics Board) \cite{OKelly_EJOR87} with $n \in \{10, 20\}$ in $\R^2$ in which the network is complete. The coordinates, $a$, the OD flows, $w$, and the set-up costs, $f$, are taken from these instances. We have implemented the HLPN with distances $\d^C, \d^D$ and $\d^H$ induced by the $\ell_1$, $\ell_2$ and the $\ell_\infty$ norms such that $\d^H\leq\d^C=d^D$, i.e., the combinations of norms for $(\d^C= \d^D, \d^H)$ in $\{(\ell_p,\ell_q): p\leq q, p, q \in \{1,2,\infty\}\}$. We also apply an economy of scale factor $\alpha\in \{0.2, 0.5, 0.8, 1\}$. In case $\alpha=1$, we use only combinations of hub-to-hub distances and hub-to-spoke distances such that $\d^H < \d^D$, i.e., pair of norms in the form $(\ell_p, \ell_q)$ with $p>q$.

We consider unit-ball norm-based neighborhoods. In particular we choose as basic neighborhoods the sets $\{z\in \R^2: \|z\|_p \leq 1\}$ for $p\in \{1, 2, \infty\}$. Different upper bounds have been consider for the dilations of the neighborhoods. In particular we fix $R_k = \tau \;\min_{i\in N:\atop i\neq k} \d^C(a_i,a_k)$ for $k\in N$, with $\tau \in \{0.25, 0.5, 1, 1.5\}$. The radius-dependent set-up costs are linear and such that $f_{\rm set-up} (y,r) = \dsum_{k\in N} f_k y_k +  \dsum_{k\in N} \Lambda_k r_k$, where $\Lambda_k = \rho f_k$ with $\rho \in \{0.01, 0.1, 1, 2\}$.

With these choices, for each value of $n$ we solved $864$ instances. Thus, moving $n$ and considering the two datasets, we solved $5184$ instances, each of them with the  two reformulations. A time limit of $2$ hours was set for all the problems.

The two approaches were coded in Python, and solved using Gurobi  8.0 in a Mac OSX Mojave with an Intel Core i7 processor at 3.3 GHz and 16GB of RAM. For the branch-and-cut approach we use the default lazy callbacks implemented in Gurobi. We denote by $(F1)$ the solution approach based on solving the compact formulation described in Section \ref{f1} and by $(F2)$ the branch-and-cut approach proposed in Section \ref{f2}.

In tables \ref{t1:10} and \ref{t1:20} we report the average results of our computational experience in the following layout:
\begin{itemize}
\item The average  CPU time (in seconds) required for solving the problem with the two approaches: ${\rm Time}_1$ (for (F1)) and ${\rm Time}_2$ (for (F2).
\item The percentage of unsolved instances with the two approaches: ${\rm US}_1$ (for (F1)) and ${\rm US}_2$ (for (F2)).
\end{itemize}

From the results, one can observe that except in a few of the small instances, the branch-and-cut approach (F2) requires less CPU Time to solve the problems for polyhedral neighborhoods ($\ell_1$ and $\ell_\infty$ neighborhoods). For  disk-shaped neighborhoods, we note that the small instances are solved in smaller times using the compact approach (F1), but as the sizes of the instances increase, the branch-and-cut performs slightly better in CPU time and in many cases, problems that were not able to be solved within the time limit with the compact approach, are then  solved by the branch-and-cut method.  Similar conclusions are derived observing the percentage of unsolved instances.

In order to analyze the performance of the branch-and-cut approach, we show in Table \ref{t2} the number of cuts needed to solve the problem with such an algorithm. The first observation that comes from the results is that the number of cuts is small. Note that the complete formulation of the branch-and-cut approach requires an exponential (in $|E|=n^2$) number constraints. However, on average, the overall number of cuts is rather small ($103$ cuts for the $n=10$ instances and $401$ for the $n=20$ instances). Moreover, although the shape of the neighborhoods seems to affect the difficulty of the problem,  the number of cuts is similar for the three different neighborhoods considered in these experiments. Finally, we remark that the CAB dataset with $n=10$ nodes requires, significatively, much less cuts than the AP dataset for the same number of points.

\setlength{\tabcolsep}{2pt}

\begin{table}[H]
\begin{center}{
\begin{adjustbox}{angle=90}
\begin{tabular}{|c|c|c|c|c||rr|rr||rr|rr||rr|rr|}\cline{6-17}
       \multicolumn{5}{c|}{}  & \multicolumn{12}{c|}{Neighborhoods}\\\cline{6-17}
      \multicolumn{5}{c|}{}    & \multicolumn{4}{c|}{$\ell_1$}  & \multicolumn{4}{c|}{$\ell_2$}  & \multicolumn{4}{c|}{$\ell_\infty$} \\\hline
$n$ & Dataset & $\alpha$ & $\d^C$& $\d^H$ & Time$_1$ & Time$_2$ & US$_1$ & US$_2$ & Time$_1$ & Time$_2$ & US$_1$ & US$_2$ & Time$_1$ & Time$_2$ & US$_1$ & US$_2$ \\\hline
\multirow{24}{*}{10} & \multirow{12}[0]{*}{AP} & \multirow{3}{*}{0.2} & \multirow{2}{*}{$\ell_1$} & $\ell_2$ & 6.71  & 4.71  & 0\% & 0\% & 17.45 & 148.57 & 0\% & 0\% & 7.70  & 4.27  & 0\% & 0\% \\
      &       &       &       & $\ell_\infty$ & 5.80  & 2.79  & 0\% & 0\% & 9.25  & 65.21 & 0\% & 0\% & 6.10  & 2.23  & 0\% & 0\% \\\cline{4-17}
      &       &       & $\ell_2$ & $\ell_\infty$ & 5.09  & 5.41  & 0\% & 0\% & 12.00 & 142.01 & 0\% & 0\% & 4.31  & 2.66  & 0\% & 0\% \\\cline{3-17}
      &       & \multirow{3}{*}{0.5} & \multirow{2}{*}{$\ell_1$} & $\ell_2$ & 8.27  & 11.06 & 0\% & 0\% & 20.15 & 323.86 & 0\% & 0\% & 9.60  & 6.23  & 0\% & 0\% \\
      &       &       &       & $\ell_\infty$ & 8.23  & 3.73  & 0\% & 0\% & 11.12 & 83.29 & 0\% & 0\% & 8.19  & 2.87  & 0\% & 0\% \\
      &       &       & $\ell_2$ & $\ell_\infty$ & 5.72  & 7.45  & 0\% & 0\% & 13.45 & 197.20 & 0\% & 0\% & 6.30  & 4.72  & 0\% & 0\% \\\cline{3-17}
      &       & \multirow{3}{*}{0.8} & \multirow{2}{*}{$\ell_1$} & $\ell_2$ & 21.45 & 13.08 & 0\% & 0\% & 40.78 & 455.11 & 0\% & 0\% & 12.84 & 10.92 & 0\% & 0\% \\
      &       &       &       & $\ell_\infty$ & 10.68 & 6.11  & 0\% & 0\% & 17.14 & 122.06 & 0\% & 0\% & 10.23 & 5.37  & 0\% & 0\% \\\cline{4-17}
      &       &       & $\ell_2$ & $\ell_\infty$ & 6.90  & 14.35 & 0\% & 0\% & 21.65 & 259.07 & 0\% & 0\% & 7.22  & 8.19  & 0\% & 0\% \\\cline{3-17}
      &       & \multirow{3}{*}{1} & \multirow{2}{*}{$\ell_1$} & $\ell_2$ & 20.48 & 17.91 & 0\% & 0\% & 43.60 & 524.87 & 0\% & 0\% & 23.86 & 20.61 & 0\% & 0\% \\
      &       &       &       & $\ell_\infty$ & 10.67 & 7.10  & 0\% & 0\% & 18.53 & 127.09 & 0\% & 0\% & 10.70 & 5.71  & 0\% & 0\% \\\cline{4-17}
      &       &       & $\ell_2$ & $\ell_\infty$ & 8.38  & 14.56 & 0\% & 0\% & 23.10 & 467.01 & 0\% & 0\% & 8.24  & 13.34 & 0\% & 0\% \\\cline{2-17}
      & \multirow{12}{*}{CAB} & \multirow{3}{*}{0.2} & \multirow{2}{*}{$\ell_1$} & $\ell_2$ & 2.89  & 0.80  & 0\% & 0\% & 5.89  & 4.37  & 0\% & 0\% & 3.56  & 0.77  & 0\% & 0\% \\
      &       &       &       & $\ell_\infty$ & 2.14  & 0.91  & 0\% & 0\% & 4.62  & 4.38  & 0\% & 0\% & 2.11  & 0.88  & 0\% & 0\% \\\cline{4-17}
      &       &       & $\ell_2$ & $\ell_\infty$ & 1.64  & 0.96  & 0\% & 0\% & 2.79  & 3.89  & 0\% & 0\% & 1.70  & 1.19  & 0\% & 0\% \\\cline{3-17}
      &       & \multirow{3}{*}{0.5} & \multirow{2}{*}{$\ell_1$} & $\ell_2$ & 3.03  & 0.88  & 0\% & 0\% & 6.50  & 6.67  & 0\% & 0\% & 3.51  & 0.84  & 0\% & 0\% \\
      &       &       &       & $\ell_\infty$ & 2.77  & 0.96  & 0\% & 0\% & 4.80  & 5.77  & 0\% & 0\% & 2.12  & 0.93  & 0\% & 0\% \\\cline{4-17}
      &       &       & $\ell_2$ & $\ell_\infty$ & 1.75  & 0.92  & 0\% & 0\% & 2.90  & 4.12  & 0\% & 0\% & 1.75  & 1.38  & 0\% & 0\% \\\cline{3-17}
      &       & \multirow{3}{*}{0.8} & \multirow{2}{*}{$\ell_1$} & $\ell_2$ & 3.51  & 0.96  & 0\% & 0\% & 7.14  & 11.43 & 0\% & 0\% & 4.32  & 1.13  & 0\% & 0\% \\
      &       &       &       & $\ell_\infty$ & 3.34  & 1.67  & 0\% & 0\% & 5.63  & 7.75  & 0\% & 0\% & 3.33  & 1.26  & 0\% & 0\% \\\cline{4-17}
      &       &       & $\ell_2$ & $\ell_\infty$ & 2.04  & 1.01  & 0\% & 0\% & 3.01  & 4.06  & 0\% & 0\% & 2.38  & 1.55  & 0\% & 0\% \\\cline{3-17}
      &       & \multirow{3}{*}{1} & \multirow{2}{*}{$\ell_1$} & $\ell_2$ & 3.73  & 1.17  & 0\% & 0\% & 7.70  & 20.48 & 0\% & 0\% & 4.61  & 1.20  & 0\% & 0\% \\
      &       &       &       & $\ell_\infty$ & 3.51  & 1.92  & 0\% & 0\% & 5.59  & 7.67  & 0\% & 0\% & 4.45  & 1.30  & 0\% & 0\% \\\cline{4-17}
      &       &       & $\ell_2$ & $\ell_\infty$ & 2.83  & 1.26  & 0\% & 0\% & 3.55  & 5.07  & 0\% & 0\% & 2.26  & 1.52  & 0\% & 0\% \\\hline
\end{tabular}
\end{adjustbox}
}
\end{center}
\caption{Average CPU Times for $n=10$ instances.\label{t1:10}}
\end{table}

\begin{table}[H]
\begin{center}{
\begin{adjustbox}{angle=90}
\begin{tabular}{|c|c|c|c|c||rr|rr||rr|rr||rr|rr|}\cline{6-17}
       \multicolumn{5}{c|}{}  & \multicolumn{12}{c|}{Neighborhoods}\\\cline{6-17}
      \multicolumn{5}{c|}{}    & \multicolumn{4}{c|}{$\ell_1$}  & \multicolumn{4}{c|}{$\ell_2$}  & \multicolumn{4}{c|}{$\ell_\infty$} \\\hline
$n$ & Dataset & $\alpha$ & $\d^C$& $\d^H$ & Time$_1$ & Time$_2$ & US$_1$ & US$_2$ & Time$_1$ & Time$_2$ & US$_1$ & US$_2$ & Time$_1$ & Time$_2$ & US$_1$ & US$_2$ \\\hline
\multirow{24}{*}{20} & \multirow{12}{*}{AP } & \multirow{3}{*}{0.2} & \multirow{2}{*}{$\ell_1$} & $\ell_2$ & 2978.18 & 196.86 & 16.67\% & 0\% &\texttt{TL}& 2338.08 & 100\% & 0\% & 2949.46 & 153.83 & 16.67\% & 0\% \\
      &       &       &       & $\ell_\infty$ & 766.26 & 125.26 & 0\% & 0\% & 2987.55 & 1163.60 & 0\% & 0\% & 572.68 & 85.13 & 0\% & 0\% \\\cline{4-17}
      &       &       & $\ell_2$ & $\ell_\infty$ & 1633.13 & 343.89 & 0\% & 0\% & 1572.01 & 1070.30 & 0\% & 25.00\% & 1382.02 & 148.57 & 0\% & 0\% \\\cline{3-17}
      &       & \multirow{3}{*}{0.5} & \multirow{2}{*}{$\ell_1$} & $\ell_2$ & 1247.69 & 1061.37 & 66.67\% & 0\% &\texttt{TL}& 3567.22 & 100\% & 50\% & 2512.62 & 429.07 & 50\% & 0\% \\
      &       &       &       & $\ell_\infty$ & 1400.85 & 173.36 & 0\% & 0\% & 2286.23 & 2580.10 & 33.33\% & 0\% & 1034.32 & 136.85 & 0\% & 0\% \\\cline{4-17}
      &       &       & $\ell_2$ & $\ell_\infty$ & 1340.51 & 216.44 & 0\% & 0\% & 1715.79 & 1144.60 & 0\% & 25.00\% & 1084.91 & 178.72 & 0\% & 0\% \\\cline{3-17}
      &       & \multirow{3}{*}{0.8} & \multirow{2}{*}{$\ell_1$} & $\ell_2$ & 3048.17 & 949.88 & 50\% & 16.67\% & \texttt{TL} & 6823.31 & 100\% & 83.33\% & 1149.02 & 1224.31 & 66.67\% & 0\% \\
      &       &       &       & $\ell_\infty$ & 2172.49 & 475.21 & 0\% & 0\% & 4493.49 & 3378.25 & 33.33\% & 33.33\% & 1691.68 & 225.57 & 0\% & 0\% \\\cline{4-17}
      &       &       & $\ell_2$ & $\ell_\infty$ & 1418.60 & 646.56 & 0\% & 0\% & 2191.26 & 4331.19 & 0\% & 25.00\% & 1286.40 & 550.92 & 0\% & 0\% \\\cline{3-17}
      &       & \multirow{3}{*}{1} & \multirow{2}{*}{$\ell_1$} & $\ell_2$ & 1546.57 & 2109.69 & 66.67\% & 16.67\% &\texttt{TL}& 6589.77 & 100\% & 66.67\% & 2029.81 & 2174.79 & 66.67\% & 0\% \\
      &       &       &       & $\ell_\infty$ & 1953.51 & 876.26 & 16.67\% & 0\% & 4698.74 & 5476.91 & 33.33\% & 50\% & 2079.19 & 426.13 & 0\% & 0\% \\\cline{4-17}
      &       &       & $\ell_2$ & $\ell_\infty$ & 2003.45 & 683.06 & 0\% & 0\% & 2342.07 & 5375.71 & 0\% & 50\% & 1321.77 & 636.97 & 0\% & 0\% \\\cline{2-17}
      & \multirow{12}{*}{CAB } & \multirow{3}{*}{0.2} & \multirow{2}{*}{$\ell_1$} & $\ell_2$ & 3556.49 & 197.61 & 16.67\% & 0\% & 4794.58 & 929.98 & 50\% & 0\% & 4351.39 & 268.91 & 16.67\% & 0\% \\
      &       &       &       & $\ell_\infty$ & 358.94 & 101.48 & 0\% & 0\% & 1029.84 & 500.31 & 0\% & 0\% & 372.58 & 94.94 & 0\% & 0\% \\\cline{4-17}
      &       &       & $\ell_2$ & $\ell_\infty$ & 663.46 & 178.50 & 0\% & 0\% & 1061.48 & 599.50 & 0\% & 0\% & 389.84 & 141.71 & 0\% & 0\% \\\cline{3-17}
      &       & \multirow{3}{*}{0.5} & \multirow{2}{*}{$\ell_1$} & $\ell_2$ & 5379.52 & 973.99 & 16.67\% & 0\% & 6090.83 & 4266.86 & 83.33\% & 33.33\% & 4456.03 & 1171.41 & 66.67\% & 0\% \\
      &       &       &       & $\ell_\infty$ & 654.30 & 212.71 & 0\% & 0\% & 2180.23 & 1298.63 & 0\% & 0\% & 673.48 & 214.19 & 0\% & 0\% \\\cline{4-17}
      &       &       & $\ell_2$ & $\ell_\infty$ & 1212.39 & 495.69 & 0\% & 0\% & 2213.60 & 2203.75 & 0\% & 20\% & 690.35 & 348.92 & 0\% & 0\% \\\cline{3-17}
      &       & \multirow{3}{*}{0.8} & \multirow{2}{*}{$\ell_1$} & $\ell_2$ & 4778.30 & 1612.24 & 33.33\% & 16.67\% & 7045.19 & 6602.71 & 83.33\% & 83.33\% & 5796.86 & 2626.66 & 33.33\% & 16.67\% \\
      &       &       &       & $\ell_\infty$ & 1266.98 & 473.70 & 0\% & 0\% & 1619.34 & 3391.85 & 33.33\% & 0\% & 1382.59 & 523.06 & 0\% & 0\% \\\cline{4-17}
      &       &       & $\ell_2$ & $\ell_\infty$ & 1601.57 & 1233.34 & 0\% & 0\% & 2992.68 & 4878.26 & 20\% & 60\% & 1338.45 & 1157.61 & 0\% & 0\% \\\cline{3-17}
      &       & \multirow{3}{*}{1} & \multirow{2}{*}{$\ell_1$} & $\ell_2$ & 5349.04 & 2650.54 & 66.67\% & 50\% &\texttt{TL} &\texttt{TL} & 100\% & 100\% & 7143.93 & 2300.92 & 66.67\% & 33.33\% \\
      &       &       &       & $\ell_\infty$ & 1819.12 & 1111.10 & 0\% & 0\% & 2660.99 & 3567.64 & 33.33\% & 33.33\% & 1968.37 & 982.11 & 0\% & 0\% \\\cline{4-17}
      &       &       & $\ell_2$ & $\ell_\infty$ & 1759.34 & 2832.40 & 16.67\% & 0\% & 3100.26 & 6687.32 & 40\% & 80\% & 1926.63 & 2588.59 & 0\% & 0\% \\
\hline
\end{tabular}
\end{adjustbox}
}
\end{center}
\caption{Average CPU Times for $n=20$ instances.\label{t1:20}}
\end{table}

\begin{table}[H]
\begin{center}{
\begin{tabular}{|c|c|c||rrr||rrr||rrr||rrr|}\cline{4-15}
       \multicolumn{3}{c|}{}  & \multicolumn{6}{c|}{$n=10$} & \multicolumn{6}{c|}{$n=20$} \\\cline{4-15}
       \multicolumn{3}{c|}{}  & \multicolumn{3}{c|}{AP} & \multicolumn{3}{c|}{CAB} & \multicolumn{3}{c|}{AP} & \multicolumn{3}{c|}{CAB}\\\cline{4-15}
       \multicolumn{3}{c|}{}  & \multicolumn{12}{c|}{Neighborhoods}\\\hline
$\alpha$ & $\d^C$& $\d^H$ & {$\ell_1$}  & \ {$\ell_2$}  &  {$\ell_\infty$} & {$\ell_1$}  &  {$\ell_2$}  &  {$\ell_\infty$} & {$\ell_1$}  & \ {$\ell_2$}  &  {$\ell_\infty$} & {$\ell_1$}  &  {$\ell_2$}  &  {$\ell_\infty$}\\\hline
 \multirow{3}{*}{0.2} & \multirow{2}{*}{$\ell_1$} & $\ell_2$ & 	128   & 103   & 130&37    & 34    & 34 &  137   & 59    & 109 & 240   & 225   & 267 \\
       &       &        $\ell_\infty$ & 						134   & 185   & 135&34    & 49    & 33 &	132   & 140   & 116& 357   & 353   & 350 \\\cline{2-15}
       &        $\ell_2$ & $\ell_\infty$ & 					124   & 79    & 107 &17    & 30    & 26 &	78     & 44    & 60   &  259   & 160   & 220 \\\cline{1-15}
        \multirow{3}{*}{0.5} & \multirow{2}{*}{$\ell_1$} & $\ell_2$ & 		179   & 151   & 172&31    & 34    & 35 &		204   & 127   & 174&323   & 459   & 503 \\
              &       & $\ell_\infty$ &						156   & 226   & 158&36    & 50    & 39 &	208   & 263   & 211& 439   & 560   & 502 \\\cline{2-15}
              & $\ell_2$ & $\ell_\infty$ & 						194   & 130   & 166&17    & 30    & 27 &	163   & 75    & 131 & 322   & 418   & 382 \\\cline{1-15}
        \multirow{3}{*}{0.8} & \multirow{2}{*}{$\ell_1$} & $\ell_2$ & 			201   & 206   & 199&31    & 46    & 41 &		353   & 172   & 252& 637   & 486   & 785 \\
              &       & $\ell_\infty$ & 							200   & 289   & 190&44    & 61    & 54 &	354   & 435   & 241& 781   & 826   & 838 \\\cline{2-15}
              & $\ell_2$ & $\ell_\infty$ & 							226   & 174   & 195&19    & 27    & 29 &	254   & 150   & 196& 570   & 725   & 695 \\\cline{1-15}
        \multirow{3}{*}{1} & \multirow{2}{*}{$\ell_1$} & $\ell_2$ & 		201   & 194   & 215&36    & 49    & 40 &		376   & 141   & 327& 972   & 463   & 891 \\
              &       & $\ell_\infty$ & 								164   & 254   & 191&48    & 55    & 53 &	478   & 636   & 331&935   & 1324  & 1116 \\\cline{2-15}
              & $\ell_2$ & $\ell_\infty$ & 						257   & 235   & 205&18    & 26    & 24 &	272   & 208   & 256&772   & 598   & 931 \\\hline
\end{tabular}
}
\end{center}
\caption{Average number of cuts.\label{t2}}
\end{table}
\begin{table}[H]
\begin{tabular}{|c|c||c|c|c|}\cline{3-5}
\multicolumn{2}{c}{$\alpha=0.5$} & \multicolumn{3}{|c|}{Neighborhood}\\\hline
$\d^C$ & $\d^H$ & $\ell_1$ & $\ell_2$ & $\ell_\infty$\\\hline
$\ell_1$ & $\ell_1$  & \parbox[c]{3.8cm}{% This file was created by matplotlib2tikz v0.6.13.
\begin{tikzpicture}

\begin{axis}[
hide x axis,
hide y axis,
axis equal,
%xmin=-3.84232346899998, xmax=64.055288261,
%ymin=-7.02090715499998, ymax=75.160075332,
%tick align=outside,
%tick pos=left,
%x grid style={lightgray!92.02614379084967!black},
%y grid style={lightgray!92.02614379084967!black}
]
\path [draw=gray, fill=gray, opacity=0.3] (axis cs:20.355966023,16.167127237)
--(axis cs:20.355966023,16.167127237)
--(axis cs:20.355966023,16.167127237)
--cycle;

\path [draw=gray, fill=gray, opacity=0.3] (axis cs:37.30436959,32.079249435)
--(axis cs:37.30436959,32.079249435)
--(axis cs:37.30436959,32.079249435)
--cycle;

\path [draw=gray, fill=gray, opacity=0.3] (axis cs:35.606530375,38.835826089)
--(axis cs:24.512513179,49.929843285)
--(axis cs:13.418495983,38.835826089)
--(axis cs:24.512513179,27.741808893)
--cycle;

%\addplot [semithick, red, mark=*, mark size=1, mark options={solid}, only marks, forget plot]
%table {%
%20.355966023 16.167127237
%};
%\addplot [semithick, red, mark=*, mark size=1, mark options={solid}, only marks, forget plot]
%table {%
%37.30436959 32.079249435
%};
%\addplot [semithick, red, mark=*, mark size=1, mark options={solid}, only marks, forget plot]
%table {%
%27.520840478 46.921515986
%};
\addplot [semithick, black, dashed, forget plot]
table {%
39.98859202 19.773197847
37.30436959 19.773197847
};
\addplot [semithick, black, dashed, forget plot]
table {%
37.30436959 19.773197847
37.30436959 32.079249435
};
\addplot [semithick, black, dashed, forget plot]
table {%
23.572548971 31.529184022
37.30436959 31.529184022
};
\addplot [semithick, black, dashed, forget plot]
table {%
37.30436959 31.529184022
37.30436959 32.079249435
};
\addplot [semithick, black, dashed, forget plot]
table {%
40.867253869 38.565884488
37.30436959 38.565884488
};
\addplot [semithick, black, dashed, forget plot]
table {%
37.30436959 38.565884488
37.30436959 32.079249435
};
\addplot [semithick, black, dashed, forget plot]
table {%
27.520840478 46.921515986
27.520840478 46.921515986
};
\addplot [semithick, black, dashed, forget plot]
table {%
27.520840478 46.921515986
27.520840478 46.921515986
};
\addplot [semithick, black, dashed, forget plot]
table {%
36.067877874 44.894490748
27.520840478 44.894490748
};
\addplot [semithick, black, dashed, forget plot]
table {%
27.520840478 44.894490748
27.520840478 46.921515986
};
\addplot [semithick, black, dashed, forget plot]
table {%
19.345710923 51.97204094
27.520840478 51.97204094
};
\addplot [semithick, black, dashed, forget plot]
table {%
27.520840478 51.97204094
27.520840478 46.921515986
};
\addplot [semithick, black, dashed, forget plot]
table {%
33.635749233 49.663479404
27.520840478 49.663479404
};
\addplot [semithick, black, dashed, forget plot]
table {%
27.520840478 49.663479404
27.520840478 46.921515986
};
\addplot [semithick, gray, mark=triangle*, mark size=3, mark options={solid}, only marks, forget plot]
table {%
20.355966023 16.167127237
};
\addplot [semithick, gray, mark=triangle*, mark size=3, mark options={solid}, only marks, forget plot]
table {%
37.30436959 32.079249435
};

\addplot [semithick, gray, mark=triangle*, mark size=3, mark options={solid}, only marks, forget plot]
table {%
27.520840478 46.921515986
};

\addplot [semithick, black, mark=*, mark size=1, mark options={solid}, only marks, forget plot]
table {%
27.520840478 46.921515986
};
%\addplot [semithick, red, dashed, forget plot]
%table {%
%27.520840478 46.921515986
%20.355966023 46.921515986
%};
%\addplot [semithick, red, dashed, forget plot]
%table {%
%20.355966023 46.921515986
%20.355966023 16.167127237
%};

%\addplot [semithick, red, dashed, forget plot]
%table {%
%27.520840478 46.921515986
%37.30436959 46.921515986
%};
%\addplot [semithick, red, dashed, forget plot]
%table {%
%37.30436959 46.921515986
%37.30436959 32.079249435
%};
%\addplot [semithick, black, mark=asterisk*, mark size=1, mark options={solid}, only marks, forget plot]
%table {%
%20.355966023 16.167127237
%};
\addplot [semithick, black, mark=*, mark size=1, mark options={solid}, only marks, forget plot]
table {%
39.98859202 19.773197847
};
\addplot [semithick, black, mark=*, mark size=1, mark options={solid}, only marks, forget plot]
table {%
23.572548971 31.529184022
};
\addplot [semithick, black, mark=asterisk*, mark size=1, mark options={solid}, only marks, forget plot]
table {%
37.30436959 32.079249435
};
\addplot [semithick, black, mark=asterisk*, mark size=1, mark options={solid}, only marks, forget plot]
table {%
24.512513179 38.835826089
};
\addplot [semithick, black, mark=*, mark size=1, mark options={solid}, only marks, forget plot]
table {%
40.867253869 38.565884488
};
\addplot [semithick, black, mark=*, mark size=1, mark options={solid}, only marks, forget plot]
table {%
36.067877874 44.894490748
};
\addplot [semithick, black, mark=*, mark size=1, mark options={solid}, only marks, forget plot]
table {%
19.345710923 51.97204094
};
\addplot [semithick, black, mark=*, mark size=1, mark options={solid}, only marks, forget plot]
table {%
33.635749233 49.663479404
};
\end{axis}

\end{tikzpicture}} & \parbox[c]{3.8cm}{% This file was created by matplotlib2tikz v0.6.13.
\begin{tikzpicture}

\begin{axis}[
hide x axis,
hide y axis,
axis equal,
%xmin=17.0126589994248, xmax=43.2003057925752,
%ymin=13.8340753134248, ymax=54.3050928635752,
%tick align=outside,
%tick pos=left,
%x grid style={lightgray!92.02614379084967!black},
%y grid style={lightgray!92.02614379084967!black}
]
\draw[draw=gray,fill=gray,opacity=0.3] (axis cs:20.355966023,16.167127237) circle (0.0342646219393926);
\draw[draw=gray,fill=gray,opacity=0.3] (axis cs:37.30436959,32.079249435) circle (0.276805391044326);
\draw[draw=gray,fill=gray,opacity=0.3] (axis cs:24.512513179,38.835826089) circle (0.275460557853353);
\draw[draw=gray,fill=gray,opacity=0.3] (axis cs:33.635749233,49.663479404) circle (0.666525961787595);
%\addplot [semithick, red, mark=*, mark size=1, mark options={solid}, only marks, forget plot]
%table {%
%20.3801223539752 16.1914288247132
%};
%\addplot [semithick, red, mark=*, mark size=1, mark options={solid}, only marks, forget plot]
%table {%
%37.5806939022238 32.0957312212387
%};
%\addplot [semithick, red, mark=*, mark size=1, mark options={solid}, only marks, forget plot]
%table {%
%24.333302040013 38.6266148456475
%};
%\addplot [semithick, red, mark=*, mark size=1, mark options={solid}, only marks, forget plot]
%table {%
%33.3295508664064 49.0714411113815
%};
\addplot [semithick, black, dashed, forget plot]
table {%
39.98859202 19.773197847
37.5806939022238 19.773197847
};
\addplot [semithick, black, dashed, forget plot]
table {%
37.5806939022238 19.773197847
37.5806939022238 32.0957312212387
};
\addplot [semithick, black, dashed, forget plot]
table {%
23.572548971 31.529184022
24.333302040013 31.529184022
};
\addplot [semithick, black, dashed, forget plot]
table {%
24.333302040013 31.529184022
24.333302040013 38.6266148456475
};
\addplot [semithick, black, dashed, forget plot]
table {%
40.867253869 38.565884488
37.5806939022238 38.565884488
};
\addplot [semithick, black, dashed, forget plot]
table {%
37.5806939022238 38.565884488
37.5806939022238 32.0957312212387
};
\addplot [semithick, black, dashed, forget plot]
table {%
27.520840478 46.921515986
33.3295508664064 46.921515986
};
\addplot [semithick, black, dashed, forget plot]
table {%
33.3295508664064 46.921515986
33.3295508664064 49.0714411113815
};
\addplot [semithick, black, dashed, forget plot]
table {%
36.067877874 44.894490748
33.3295508664064 44.894490748
};
\addplot [semithick, black, dashed, forget plot]
table {%
33.3295508664064 44.894490748
33.3295508664064 49.0714411113815
};
\addplot [semithick, black, dashed, forget plot]
table {%
19.345710923 51.97204094
33.3295508664064 51.97204094
};
\addplot [semithick, black, dashed, forget plot]
table {%
33.3295508664064 51.97204094
33.3295508664064 49.0714411113815
};
\addplot [semithick, gray, mark=triangle*, mark size=3, mark options={solid}, only marks, forget plot]
table {%
20.3801223539752 16.1914288247132
};
\addplot [semithick, gray, mark=triangle*, mark size=3,  mark options={solid}, only marks, forget plot]
table {%
37.5806939022238 32.0957312212387
};
%\addplot [semithick, red, dashed, forget plot]
%table {%
%37.5806939022238 32.0957312212387
%20.3801223539752 32.0957312212387
%};
%\addplot [semithick, red, dashed, forget plot]
%table {%
%20.3801223539752 32.0957312212387
%20.3801223539752 16.1914288247132
%};
\addplot [semithick,gray, mark=triangle*, mark size=3,  mark options={solid}, only marks, forget plot]
table {%
33.3295508664064 49.0714411113815
};
\addplot [semithick, gray, mark=triangle*, mark size=3,  mark options={solid}, only marks, forget plot]
table {%
24.333302040013 38.6266148456475
};
\addplot [semithick, black, mark=asterisk*, mark size=1, mark options={solid}, only marks, forget plot]
table {%
20.355966023 16.167127237
};
\addplot [semithick, black, mark=*, mark size=1, mark options={solid}, only marks, forget plot]
table {%
39.98859202 19.773197847
};
\addplot [semithick, black, mark=*, mark size=1, mark options={solid}, only marks, forget plot]
table {%
23.572548971 31.529184022
};
\addplot [semithick, black, mark=asterisk*, mark size=1, mark options={solid}, only marks, forget plot]
table {%
37.30436959 32.079249435
};
\addplot [semithick, black, mark=asterisk*, mark size=1, mark options={solid}, only marks, forget plot]
table {%
24.512513179 38.835826089
};
\addplot [semithick, black, mark=*, mark size=1, mark options={solid}, only marks, forget plot]
table {%
40.867253869 38.565884488
};
\addplot [semithick, black, mark=*, mark size=1, mark options={solid}, only marks, forget plot]
table {%
27.520840478 46.921515986
};
\addplot [semithick, black, mark=*, mark size=1, mark options={solid}, only marks, forget plot]
table {%
36.067877874 44.894490748
};
\addplot [semithick, black, mark=*, mark size=1, mark options={solid}, only marks, forget plot]
table {%
19.345710923 51.97204094
};
\addplot [semithick, black, mark=asterisk*, mark size=1, mark options={solid}, only marks, forget plot]
table {%
33.635749233 49.663479404
};
\end{axis}

\end{tikzpicture}} & \parbox[c]{4.5cm}{% This file was created by matplotlib2tikz v0.6.13.
\begin{tikzpicture}

\begin{axis}[
hide x axis,
hide y axis,
axis equal,
%xmin=1.995451813, xmax=58.217512979,
%ymin=-1.183131873, ymax=69.32230005,
%tick align=outside,
%tick pos=left,
%x grid style={lightgray!92.02614379084967!black},
%y grid style={lightgray!92.02614379084967!black}
]
\path [draw=gray, fill=gray, opacity=0.3] (axis cs:20.355966023,16.167127237)
--(axis cs:20.355966023,16.167127237)
--(axis cs:20.355966023,16.167127237)
--cycle;

\path [draw=gray, fill=gray, opacity=0.3] (axis cs:37.30436959,32.079249435)
--(axis cs:37.30436959,32.079249435)
--(axis cs:37.30436959,32.079249435)
--cycle;

\path [draw=gray, fill=gray, opacity=0.3] (axis cs:27.520840478,60.147170495)
--(axis cs:27.520840478,43.796911385)
--(axis cs:11.170581368,43.796911385)
--(axis cs:11.170581368,60.147170495)
--cycle;

%\addplot [semithick, red, mark=*, mark size=1, mark options={solid}, only marks, forget plot]
%table {%
%20.355966023 16.167127237
%};
%\addplot [semithick, red, mark=*, mark size=1, mark options={solid}, only marks, forget plot]
%table {%
%37.30436959 32.079249435
%};
%\addplot [semithick, red, mark=*, mark size=1, mark options={solid}, only marks, forget plot]
%table {%
%27.520840478 46.921515986
%};
\addplot [semithick, black, dashed, forget plot]
table {%
39.98859202 19.773197847
37.30436959 19.773197847
};
\addplot [semithick, black, dashed, forget plot]
table {%
37.30436959 19.773197847
37.30436959 32.079249435
};
\addplot [semithick, black, dashed, forget plot]
table {%
23.572548971 31.529184022
37.30436959 31.529184022
};
\addplot [semithick, black, dashed, forget plot]
table {%
37.30436959 31.529184022
37.30436959 32.079249435
};
\addplot [semithick, black, dashed, forget plot]
table {%
24.512513179 38.835826089
27.520840478 38.835826089
};
\addplot [semithick, black, dashed, forget plot]
table {%
27.520840478 38.835826089
27.520840478 46.921515986
};
\addplot [semithick, black, dashed, forget plot]
table {%
40.867253869 38.565884488
37.30436959 38.565884488
};
\addplot [semithick, black, dashed, forget plot]
table {%
37.30436959 38.565884488
37.30436959 32.079249435
};
\addplot [semithick, black, dashed, forget plot]
table {%
27.520840478 46.921515986
27.520840478 46.921515986
};
\addplot [semithick, black, dashed, forget plot]
table {%
27.520840478 46.921515986
27.520840478 46.921515986
};
\addplot [semithick, black, dashed, forget plot]
table {%
36.067877874 44.894490748
27.520840478 44.894490748
};
\addplot [semithick, black, dashed, forget plot]
table {%
27.520840478 44.894490748
27.520840478 46.921515986
};
\addplot [semithick, black, dashed, forget plot]
table {%
33.635749233 49.663479404
27.520840478 49.663479404
};
\addplot [semithick, black, dashed, forget plot]
table {%
27.520840478 49.663479404
27.520840478 46.921515986
};
\addplot [semithick, gray, mark=triangle*, mark size=3, mark options={solid}, only marks, forget plot]
table {%
20.355966023 16.167127237
};
\addplot [semithick, gray, mark=triangle*, mark size=3, mark options={solid}, only marks, forget plot]
table {%
37.30436959 32.079249435
};
%\addplot [semithick, red, dashed, forget plot]
%table {%
%37.30436959 32.079249435
%20.355966023 32.079249435
%};
%\addplot [semithick, red, dashed, forget plot]
%table {%
%20.355966023 32.079249435
%20.355966023 16.167127237
%};
%\addplot [semithick,gray, mark=triangle*, mark size=1, mark options={solid}, only marks, forget plot]
%table {%
%20.355966023 16.167127237
%};
\addplot [semithick, gray, mark=triangle*, mark size=3, mark options={solid}, only marks, forget plot]
table {%
27.520840478 46.921515986
};
%\addplot [semithick, red, dashed, forget plot]
%table {%
%27.520840478 46.921515986
%20.355966023 46.921515986
%};
%\addplot [semithick, red, dashed, forget plot]
%table {%
%20.355966023 46.921515986
%20.355966023 16.167127237
%};
%\addplot [semithick, blue, mark=*, mark size=1, mark options={solid}, only marks, forget plot]
%table {%
%37.30436959 32.079249435
%};
%\addplot [semithick, blue, mark=*, mark size=1, mark options={solid}, only marks, forget plot]
%table {%
%27.520840478 46.921515986
%};
%\addplot [semithick, red, dashed, forget plot]
%table {%
%27.520840478 46.921515986
%37.30436959 46.921515986
%};
%\addplot [semithick, red, dashed, forget plot]
%table {%
%37.30436959 46.921515986
%37.30436959 32.079249435
%};
\addplot [semithick, black, mark=*, mark size=1, mark options={solid}, only marks, forget plot]
table {%
27.520840478 46.921515986
};
\addplot [semithick, black, mark=*, mark size=1, mark options={solid}, only marks, forget plot]
table {%
39.98859202 19.773197847
};
\addplot [semithick, black, mark=*, mark size=1, mark options={solid}, only marks, forget plot]
table {%
23.572548971 31.529184022
};
%\addplot [semithick, black, mark=asterisk*, mark size=1, mark options={solid}, only marks, forget plot]
%table {%
%37.30436959 32.079249435
%};
\addplot [semithick, black, mark=*, mark size=1, mark options={solid}, only marks, forget plot]
table {%
24.512513179 38.835826089
};
\addplot [semithick, black, mark=*, mark size=1, mark options={solid}, only marks, forget plot]
table {%
40.867253869 38.565884488
};
%\addplot [semithick, black, mark=*, mark size=1, mark options={solid}, only marks, forget plot]
%table {%
%27.520840478 46.921515986
%};
\addplot [semithick, black, mark=*, mark size=1, mark options={solid}, only marks, forget plot]
table {%
36.067877874 44.894490748
};
%\addplot [semithick, black, mark=asterisk*, mark size=1, mark options={solid}, only marks, forget plot]
%table {%
%19.345710923 51.97204094
%};
\addplot [semithick, black, mark=*, mark size=1, mark options={solid}, only marks, forget plot]
table {%
33.635749233 49.663479404
};
\end{axis}

\end{tikzpicture}}\\\hline
%$\ell_1$  & $\ell_2$ & \input{ap10a_1_2_1} &\input{ap10a_1_2_2} & \input{ap10a_1_2_100}\\\hline
%$\ell_1$  & $\ell_\infty$ & \input{ap10a_1_100_1} & \input{ap10a_1_100_2} & \input{ap10a_1_100_100}\\\hline
 $\ell_2$  &  $\ell_2$ & \parbox[c]{3.8cm}{% This file was created by matplotlib2tikz v0.6.13.
\begin{tikzpicture}

\begin{axis}[
hide x axis,
hide y axis,
axis equal,
%xmin=-3.75485375458631, xmax=63.9678185465863,
%ymin=-6.93343744058631, ymax=75.0726056175863,
%tick align=outside,
%tick pos=left,
%x grid style={lightgray!92.02614379084967!black},
%y grid style={lightgray!92.02614379084967!black}
]
\path [draw=gray, fill=gray, opacity=0.3] (axis cs:37.30436959,32.079249435)
--(axis cs:37.30436959,32.079249435)
--(axis cs:37.30436959,32.079249435)
--cycle;

\path [draw=gray, fill=gray, opacity=0.3] (axis cs:35.5627955177932,38.835826089)
--(axis cs:24.512513179,49.8861084277932)
--(axis cs:13.4622308402068,38.835826089)
--(axis cs:24.512513179,27.7855437502068)
--cycle;

%\addplot [semithick, red, mark=*, mark size=1, mark options={solid}, only marks, forget plot]
%table {%
%37.30436959 32.079249435
%};
%\addplot [semithick, red, mark=*, mark size=1, mark options={solid}, only marks, forget plot]
%table {%
%27.5107591827101 46.887862424083
%};
\addplot [semithick, black, dashed, forget plot]
table {%
20.355966023 16.167127237
37.30436959 32.079249435
};
\addplot [semithick, black, dashed, forget plot]
table {%
39.98859202 19.773197847
37.30436959 32.079249435
};
\addplot [semithick, black, dashed, forget plot]
table {%
23.572548971 31.529184022
37.30436959 32.079249435
};
\addplot [semithick, black, dashed, forget plot]
table {%
40.867253869 38.565884488
37.30436959 32.079249435
};
\addplot [semithick, black, dashed, forget plot]
table {%
27.520840478 46.921515986
27.5107591827101 46.887862424083
};
\addplot [semithick, black, dashed, forget plot]
table {%
36.067877874 44.894490748
27.5107591827101 46.887862424083
};
\addplot [semithick, black, dashed, forget plot]
table {%
19.345710923 51.97204094
27.5107591827101 46.887862424083
};
\addplot [semithick, black, dashed, forget plot]
table {%
33.635749233 49.663479404
27.5107591827101 46.887862424083
};
\addplot [semithick, gray, mark=triangle*, mark size=3, mark options={solid}, only marks, forget plot]
table {%
37.30436959 32.079249435
};
\addplot [semithick, gray, mark=triangle*, mark size=3, mark options={solid}, only marks, forget plot]
table {%
27.5107591827101 46.887862424083
};
%\addplot [semithick, red, dashed, forget plot]
%table {%
%27.5107591827101 46.887862424083
%37.30436959 32.079249435
%};
\addplot [semithick, black, mark=*, mark size=1, mark options={solid}, only marks, forget plot]
table {%
20.355966023 16.167127237
};
\addplot [semithick, black, mark=*, mark size=1, mark options={solid}, only marks, forget plot]
table {%
39.98859202 19.773197847
};
\addplot [semithick, black, mark=*, mark size=1, mark options={solid}, only marks, forget plot]
table {%
23.572548971 31.529184022
};
\addplot [semithick, black, mark=asterisk*, mark size=1, mark options={solid}, only marks, forget plot]
table {%
37.30436959 32.079249435
};
\addplot [semithick, black, mark=asterisk*, mark size=1, mark options={solid}, only marks, forget plot]
table {%
24.512513179 38.835826089
};
\addplot [semithick, black, mark=*, mark size=1, mark options={solid}, only marks, forget plot]
table {%
40.867253869 38.565884488
};
\addplot [semithick, black, mark=*, mark size=1, mark options={solid}, only marks, forget plot]
table {%
27.520840478 46.921515986
};
\addplot [semithick, black, mark=*, mark size=1, mark options={solid}, only marks, forget plot]
table {%
36.067877874 44.894490748
};
\addplot [semithick, black, mark=*, mark size=1, mark options={solid}, only marks, forget plot]
table {%
19.345710923 51.97204094
};
\addplot [semithick, black, mark=*, mark size=1, mark options={solid}, only marks, forget plot]
table {%
33.635749233 49.663479404
};
\end{axis}

\end{tikzpicture}} & \parbox[c]{3.8cm}{% This file was created by matplotlib2tikz v0.6.13.
\begin{tikzpicture}

\begin{axis}[
hide x axis,
hide y axis,
axis equal,
%xmin=17.2831566961489, xmax=42.9298080958511,
%ymin=14.1045730101489, ymax=54.0345951668511,
%tick align=outside,
%tick pos=left,
%x grid style={lightgray!92.02614379084967!black},
%y grid style={lightgray!92.02614379084967!black}
]
\draw[draw=gray,fill=gray,opacity=0.3] (axis cs:37.30436959,32.079249435) circle (0.0790579991593904);
\draw[draw=gray,fill=gray,opacity=0.3] (axis cs:24.512513179,38.835826089) circle (0.402576705440978);
\draw[draw=gray,fill=gray,opacity=0.3] (axis cs:33.635749233,49.663479404) circle (0.531277113425545);
%\addplot [semithick, red, mark=*, mark size=1, mark options={solid}, only marks, forget plot]
%table {%
%37.3834275891594 32.0819062195949
%};
%\addplot [semithick, red, mark=*, mark size=1, mark options={solid}, only marks, forget plot]
%table {%
%24.4977899304393 38.4335079258614
%};
%\addplot [semithick, red, mark=*, mark size=1, mark options={solid}, only marks, forget plot]
%table {%
%33.2570269379745 49.2908782075396
%};
\addplot [semithick, black, dashed, forget plot]
table {%
20.355966023 16.167127237
24.4977899304393 38.4335079258614
};
\addplot [semithick, black, dashed, forget plot]
table {%
39.98859202 19.773197847
37.3834275891594 32.0819062195949
};
\addplot [semithick, black, dashed, forget plot]
table {%
23.572548971 31.529184022
24.4977899304393 38.4335079258614
};
\addplot [semithick, black, dashed, forget plot]
table {%
40.867253869 38.565884488
37.3834275891594 32.0819062195949
};
\addplot [semithick, black, dashed, forget plot]
table {%
27.520840478 46.921515986
33.2570269379745 49.2908782075396
};
\addplot [semithick, black, dashed, forget plot]
table {%
36.067877874 44.894490748
33.2570269379745 49.2908782075396
};
\addplot [semithick, black, dashed, forget plot]
table {%
19.345710923 51.97204094
33.2570269379745 49.2908782075396
};
\addplot [semithick, gray, mark=triangle*, mark size=3, mark options={solid}, only marks, forget plot]
table {%
37.3834275891594 32.0819062195949
};
\addplot [semithick, gray, mark=triangle*, mark size=3, mark options={solid}, only marks, forget plot]
table {%
24.4977899304393 38.4335079258614
};
%\addplot [semithick, red, dashed, forget plot]
%table {%
%24.4977899304393 38.4335079258614
%37.3834275891594 32.0819062195949
%};
\addplot [semithick, gray, mark=triangle*, mark size=3, mark options={solid}, only marks, forget plot]
table {%
37.3834275891594 32.0819062195949
};
\addplot [semithick, gray, mark=triangle*, mark size=3, mark options={solid}, only marks, forget plot]
table {%
33.2570269379745 49.2908782075396
};
%\addplot [semithick, red, dashed, forget plot]
%table {%
%33.2570269379745 49.2908782075396
%37.3834275891594 32.0819062195949
%};
\addplot [semithick, gray, mark=triangle*, mark size=3, mark options={solid}, only marks, forget plot]
table {%
24.4977899304393 38.4335079258614
};
\addplot [semithick, gray, mark=triangle*, mark size=3, mark options={solid}, only marks, forget plot]
table {%
33.2570269379745 49.2908782075396
};
%\addplot [semithick, red, dashed, forget plot]
%table {%
%33.2570269379745 49.2908782075396
%24.4977899304393 38.4335079258614
%};
\addplot [semithick, black, mark=*, mark size=1, mark options={solid}, only marks, forget plot]
table {%
20.355966023 16.167127237
};
\addplot [semithick, black, mark=*, mark size=1, mark options={solid}, only marks, forget plot]
table {%
39.98859202 19.773197847
};
\addplot [semithick, black, mark=*, mark size=1, mark options={solid}, only marks, forget plot]
table {%
23.572548971 31.529184022
};
\addplot [semithick, black, mark=asterisk*, mark size=1, mark options={solid}, only marks, forget plot]
table {%
37.30436959 32.079249435
};
\addplot [semithick, black, mark=asterisk*, mark size=1, mark options={solid}, only marks, forget plot]
table {%
24.512513179 38.835826089
};
\addplot [semithick, black, mark=*, mark size=1, mark options={solid}, only marks, forget plot]
table {%
40.867253869 38.565884488
};
\addplot [semithick, black, mark=*, mark size=1, mark options={solid}, only marks, forget plot]
table {%
27.520840478 46.921515986
};
\addplot [semithick, black, mark=*, mark size=1, mark options={solid}, only marks, forget plot]
table {%
36.067877874 44.894490748
};
\addplot [semithick, black, mark=*, mark size=1, mark options={solid}, only marks, forget plot]
table {%
19.345710923 51.97204094
};
\addplot [semithick, black, mark=asterisk*, mark size=1, mark options={solid}, only marks, forget plot]
table {%
33.635749233 49.663479404
};
\end{axis}

\end{tikzpicture}} & \parbox[c]{4cm}{% This file was created by matplotlib2tikz v0.6.13.
\begin{tikzpicture}

\begin{axis}[
hide x axis,
hide y axis,
axis equal,
%xmin=2.41103932405213, xmax=57.8019254679479,
%ymin=-0.767544361947873, ymax=68.9067125389479,
%tick align=outside,
%tick pos=left,
%x grid style={lightgray!92.02614379084967!black},
%y grid style={lightgray!92.02614379084967!black}
]
\path [draw=gray, fill=gray, opacity=0.3] (axis cs:37.5346076388758,32.3094874838758)
--(axis cs:37.5346076388758,31.8490113861242)
--(axis cs:37.0741315411242,31.8490113861242)
--(axis cs:37.0741315411242,32.3094874838758)
--cycle;

\path [draw=gray, fill=gray, opacity=0.3] (axis cs:32.4798489784739,46.8031618884739)
--(axis cs:32.4798489784739,30.8684902895261)
--(axis cs:16.5451773795261,30.8684902895261)
--(axis cs:16.5451773795261,46.8031618884739)
--cycle;

%\addplot [semithick, red, mark=*, mark size=1, mark options={solid}, only marks, forget plot]
%table {%
%37.0741315411242 31.8490113861242
%};
%\addplot [semithick, red, mark=*, mark size=1, mark options={solid}, only marks, forget plot]
%table {%
%27.6413543689997 46.8031618884739
%};
\addplot [semithick, black, dashed, forget plot]
table {%
20.355966023 16.167127237
37.0741315411242 31.8490113861242
};
\addplot [semithick, black, dashed, forget plot]
table {%
39.98859202 19.773197847
37.0741315411242 31.8490113861242
};
\addplot [semithick, black, dashed, forget plot]
table {%
23.572548971 31.529184022
37.0741315411242 31.8490113861242
};
\addplot [semithick, black, dashed, forget plot]
table {%
40.867253869 38.565884488
37.0741315411242 31.8490113861242
};
\addplot [semithick, black, dashed, forget plot]
table {%
27.520840478 46.921515986
27.6413543689997 46.8031618884739
};
\addplot [semithick, black, dashed, forget plot]
table {%
36.067877874 44.894490748
27.6413543689997 46.8031618884739
};
\addplot [semithick, black, dashed, forget plot]
table {%
19.345710923 51.97204094
27.6413543689997 46.8031618884739
};
\addplot [semithick, black, dashed, forget plot]
table {%
33.635749233 49.663479404
27.6413543689997 46.8031618884739
};
\addplot [semithick, gray, mark=triangle*, mark size=3, mark options={solid}, only marks, forget plot]
table {%
37.0741315411242 31.8490113861242
};
\addplot [semithick, gray, mark=triangle*, mark size=3, mark options={solid}, only marks, forget plot]
table {%
27.6413543689997 46.8031618884739
};
%\addplot [semithick, red, dashed, forget plot]
%table {%
%27.6413543689997 46.8031618884739
%37.0741315411242 31.8490113861242
%};
\addplot [semithick, black, mark=*, mark size=1, mark options={solid}, only marks, forget plot]
table {%
20.355966023 16.167127237
};
\addplot [semithick, black, mark=*, mark size=1, mark options={solid}, only marks, forget plot]
table {%
39.98859202 19.773197847
};
\addplot [semithick, black, mark=*, mark size=1, mark options={solid}, only marks, forget plot]
table {%
23.572548971 31.529184022
};
\addplot [semithick, black, mark=asterisk*, mark size=1, mark options={solid}, only marks, forget plot]
table {%
37.30436959 32.079249435
};
\addplot [semithick, black, mark=asterisk*, mark size=1, mark options={solid}, only marks, forget plot]
table {%
24.512513179 38.835826089
};
\addplot [semithick, black, mark=*, mark size=1, mark options={solid}, only marks, forget plot]
table {%
40.867253869 38.565884488
};
\addplot [semithick, black, mark=*, mark size=1, mark options={solid}, only marks, forget plot]
table {%
27.520840478 46.921515986
};
\addplot [semithick, black, mark=*, mark size=1, mark options={solid}, only marks, forget plot]
table {%
36.067877874 44.894490748
};
\addplot [semithick, black, mark=*, mark size=1, mark options={solid}, only marks, forget plot]
table {%
19.345710923 51.97204094
};
\addplot [semithick, black, mark=*, mark size=1, mark options={solid}, only marks, forget plot]
table {%
33.635749233 49.663479404
};
\end{axis}

\end{tikzpicture}}\\\hline
% $\ell_2$ & $\ell_\infty$ & \input{ap10a_2_100_1} & \input{ap10a_2_100_2} & \input{ap10a_2_100_100}\\\hline
\end{tabular}
\caption{Solutions for the same AP ($n=10$) instance and different costs and neighborhood shapes.\label{fig:draws}}
\end{table}

Finally, in Figure \ref{fig:draws} we draw some of the solution for the AP dataset for $n=10$ and different choices for the costs $\d^C$ and $\d^H$ and neighborhoods shapes. Observe that both the shapes of the neighborhoods and the distance-based costs affect the optimal position of the hubs and the allocation pattern of the hub-and-spoke network.

\section{Conclusions}\label{sec:6}

We analyze in this paper a new version of the uncapacitated hub location problem with fixed costs where geographical flexibility is allowed for locating the hub nodes.  We propose a general framework for the problem by modeling the geographical flexibility using neighborhood-second-order-cone representable regions around the original positions of the nodes and measuring distribution, collection and transportation costs by means of $\ell_p$-norm distances. We propose a mixed integer non linear programming formulation for the problem and we provide two different formulations which make it tractable using commercial MISOCO solvers. The first formulation consists of a linearization of some bilinear and trilinear terms while the second one is based on introducing a novel set of exponentially many constraints for which an efficient separation oracle and a branch-and-cut approach is presented.

Future research on the topic includes the incorporation of preferences onto the neighborhoods allowing the decision-maker to determine his most favorite regions. Also, some different models of hub location  as those with incomplete backbone networks or covering objective functions can be extended to be analyzed with neighborhoods.

\section*{Acknowledgements}

The authors were partially supported by project  MTM2016-74983-C2-1-R (MINECO, Spain). We also would like to acknowledge Elena Fern\'andez (Universidad de C\'adiz) for her useful and detailed comments on previous versions of this manuscript.

\end{document}